\newtheorem{theorem}{Theorem}
\newtheorem{lemma}{Lemma}
\newtheorem{false statement}{False statement}
\theoremstyle{definition}
\newtheorem{claim}{Claim}
\newtheorem{remark}[claim]{Remark}
\newtheorem{corollary}[claim]{Corollary}
\newtheorem{problem}{Problem}
\newtheorem{case}{Case}
\newcounter{mathitem}
\begin{document}

\title{\bf\Large Spectral radius and Hamiltonian properties of graphs\footnote{We found a gap in the
proof of Theorem 2 in previous versions including the published version. In this version,
we fill the gap and correct some typos as well.}}

\date{}

\author{Bo Ning$^{1}$\footnote{Corresponding author. E-mail address: ningbo\_math84@mail.nwpu.edu.cn}
and Jun Ge$^{2}$\footnote{E-mail address: mathsgejun@163.com}\\[2mm]
\small $^{1}$Department of Applied Mathematics, School of Science\\
\small  Northwestern Polytechnical University, Xi'an, Shaanxi 710072, P. R. China\\
\small $^{2}$School of Mathematical Sciences \\
\small Xiamen University, Xiamen, Fujian 361005, P. R. China}

\maketitle

\begin{abstract}
Let $G$ be a graph with minimum degree $\delta$. The spectral radius of $G$,
denoted by $\rho(G)$, is the largest eigenvalue of the adjacency matrix
of $G$. In this note we mainly prove the following two results.

(1) Let $G$ be a graph on $n\geq 4$ vertices with $\delta\geq 1$. If $\rho(G)> n-3$, then $G$ contains a Hamilton path unless $G\in\{K_1\vee (K_{n-3}+2K_1),K_2\vee 4K_1,K_1\vee (K_{1,3}+K_1)\}$.

(2) Let $G$ be a graph on $n\geq 14$ vertices with $\delta \geq 2$. If
$\rho(G)\geq \rho(K_2\vee (K_{n-4}+2K_1))$, then $G$ contains a
Hamilton cycle unless $G= K_2\vee (K_{n-4}+2K_1)$.

As corollaries of our first result, two previous theorems due to Fiedler \& Nikiforov
and Lu et al. are obtained, respectively. Our second result refines another previous theorem of Fiedler \& Nikiforov.

\medskip
\noindent {\bf Keywords:} Spectral radius; Spectral extremal graph theory; Hamilton path; Hamilton cycle

\smallskip
\noindent {\bf Mathematics Subject Classification (2010):} 05C50 15A18 05C38
\end{abstract}

\section{Introduction}
Throughout this note, we use $G=(V(G),E(G))$ to denote a finite simple
undirected graph with vertex set $V(G)$ and edge set $E(G)$. Given a
graph $G$, we use $A$ to denote its adjacency matrix. Let $v_i\in V(G)$.
We denote by $d_i$ the \emph{degree} of $v_i$. Let $(d_1,d_2,...,d_n)$
be the \emph{degree sequence} of $G$, where $d_i$'s are in non-decreasing order.
The \emph{spectral radius} of $G$, denoted by $\rho(G)$, is the largest eigenvalues of $A$.
We denote by $\delta(G)$ or simply $\delta$ the \emph{minimum degree} of $G$.

Let $G_1=(V(G_1),E(G_1))$ and $G_2=(V(G_2),E(G_2))$ be two graphs. The \emph{union}
of $G_1$ and $G_2$, denoted by $G_1\cup G_2$, is the graph with vertex set
$V(G_1)\cup V(G_2)$ and edge set $E(G_1)\cup E(G_2)$. If $G_1$ and $G_2$ are
disjoint, then we call their union a \emph{disjoint union}, and denote it by
$G_1+G_2$. We denote the union of $k$ disjoint copies of a graph $G$ by $kG$.
The \emph{join} of two disjoint graphs $G_1$ and $G_2$, denoted by $G_1\vee G_2$,
is obtained from $G_1+G_2$ by joining each vertex of $G_1$ to each vertex of $G_2$.

In \cite{Brualdi_Solheid}, Brualdi and Solheid raised the following spectral problem.

\begin{problem}\label{pr1}
What is the maximum spectral radius of a graph $G$ on $n$ vertices belonging to a specified class of graphs?
\end{problem}

Recently, the following important type of problem (Brualdi-Solheid-Tur\'an type problem) has been extensively studied by many graph theorists.

\begin{problem}\label{pr2}
For a given graph $F$, what is the maximum spectral radius of a graph $G$ on $n$ vertices without subgraph isomorphic to $F$?
\end{problem}

Up to now, Problem \ref{pr2} has been considered for the cases that $F$ is a clique, an even or odd path (cycle) of given length, and a Hamilton path (cycle) \cite{Fiedler_Nikiforov,Lu_Liu_Tian,Nikiforov2,Nikiforov3,Yuan_Wang_Zhai,Zhai_Wang}.

In particular, sufficient spectral conditions for the existence of Hamilton paths and cycles receive extensive attention from many graph theorists. Fiedler and Nikiforov \cite{Fiedler_Nikiforov} gave tight sufficient conditions for the existence of Hamilton paths and cycles in terms of the spectral radius of graphs or the complement of graphs. Lu \cite{Lu_Liu_Tian} et al. studied sufficient conditions for Hamilton paths in connected graphs and Hamilton cycles in bipartite graphs in terms of the spectral radius of a graph. Some other spectral conditions for Hamilton paths and cycles in graphs have been given in \cite{Butler_Chung,Heuvel,Krivelevich_Sudakov,Mohar,Zhou}.

Since $\delta\geq 1$ ($\delta\geq 2$) is a trivial necessary condition when finding a Hamilton path (Hamilton cycle) in a given graph $G$, we make this assumption when finding spectral conditions for Hamilton paths (Hamilton cycles) of graphs throughout this note.

In this note, we mainly get two theorems.

\begin{theorem}\label{tNG1}
Let $G$ be a graph on $n\geq 4$ vertices with $\delta\geq 1$. Let $G^{(1)}_n=K_1\vee (K_{n-3}+2K_1)$. If $\rho(G)> n-3$, then $G$ contains a Hamilton path unless $G\in \{G^{(1)}_n,K_2\vee 4K_1,K_1\vee (K_{1,3}+K_1)\}$.
\end{theorem}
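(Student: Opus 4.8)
The plan is to argue by contradiction, reducing traceability to a degree-sequence condition and then converting the spectral hypothesis into an edge bound that the non-traceable structure cannot sustain. First I would record that $G$ must be connected: since $\delta\ge 1$ forces every component to have at least two vertices, a disconnected graph on $n$ vertices has a largest component on at most $n-2$ vertices, whence $\rho(G)\le\rho(K_{n-2})=n-3$, contrary to $\rho(G)>n-3$. Now assume $G$ has no Hamilton path and is none of the three listed graphs. Writing the degree sequence as $d_1\le\cdots\le d_n$, I would invoke the Chv\'atal-type degree condition for traceability (equivalently, Chv\'atal's theorem applied to $G\vee K_1$): since $G$ is not traceable, there is an integer $i$ with $2\le i$ and $2i\le n$ such that $d_i\le i-1$ and $d_{n-i+1}\le n-i-1$, where $i\ge 2$ because $\delta\ge 1$ rules out $d_1\le 0$.

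Next I would turn these two inequalities into an upper bound on $e(G)$. Summing the degree sequence using $d_j\le i-1$ for $j\le i$, $d_j\le n-i-1$ for $i<j\le n-i+1$, and $d_j\le n-1$ for the top $i-1$ vertices gives $e(G)\le e(H_{n,i})$, where $H_{n,i}:=K_{i-1}\vee(K_{n-2i+1}+iK_1)$ is the edge-maximal graph realizing this constraint; explicitly $e(H_{n,i})=\binom{i-1}{2}+\binom{n-2i+1}{2}+(i-1)(n-i+1)$. A short computation gives $e(H_{n,2})=\frac{n^2-5n+10}{2}$ (so that $H_{n,2}=G^{(1)}_n$) and shows that, over the admissible range, the maximum of $e(H_{n,i})$ is attained at $i=2$ once $n\ge 7$, whereas for $n=6$ it moves to $i=3$ with $H_{6,3}=K_2\vee 4K_1$. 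This bookkeeping in $i$ is exactly what surfaces the small-$n$ exceptions.

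The spectral input enters through an upper bound for $\rho(G)$ in terms of $e(G)$: combining $\rho(G)>n-3$ with Hong's bound $\rho(G)\le\sqrt{2e(G)-n+1}$ forces $e(G)>\frac{n^2-5n+8}{2}$. For $i\ge 3$ and $n$ past a small threshold this lower bound contradicts $e(G)\le e(H_{n,i})$ from the previous step, so the only surviving case is $i=2$; there the two bounds on $e(G)$ differ by $1$ and pin $e(G)=e(H_{n,2})$, which I would then use to force $G=G^{(1)}_n$. The finitely many remaining small-$n$ graphs — in particular the $n=6$ instances $K_2\vee 4K_1$ and $K_1\vee(K_{1,3}+K_1)$ produced when the maximizer shifts to $i=3$ — I would verify by hand.

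I expect the main obstacle to be this last, spectral, step rather than the combinatorics. Hong's bound is not tight on $G^{(1)}_n$ itself, whose spectral radius only slightly exceeds $n-3$, so pure edge-counting cannot separate $G^{(1)}_n$ from near-extremal non-exceptional graphs: knowing merely that $e(G)=e(H_{n,2})$ does not by itself yield $G\cong H_{n,2}$, since $\rho$ is not monotone in the number of edges. The delicate part — and, I suspect, the source of the gap acknowledged in the paper's footnote — is this equality analysis: proving that among all connected non-traceable graphs with $\rho>n-3$ only the three named graphs occur. I would handle it by working with the Perron eigenvector of $G$ together with a quotient-matrix (interlacing) computation for the candidate extremal graphs $H_{n,i}$, comparing $\rho(G)$ against $\rho(H_{n,i})$ directly rather than through edge counts, so that the strict monotonicity needed to identify $G$ is genuinely available.
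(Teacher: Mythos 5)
Your strategy is essentially the paper's own: Chv\'atal's condition applied to $G\vee K_1$, a degree-sum edge bound, Hong's inequality $\rho(G)\le\sqrt{2m-n+1}$ plus parity to force $m\ge\binom{n-2}{2}+2$, and an equality analysis of the extremal configuration (the paper packages the combinatorial half into its Lemma 3, obtained from a Hamilton-cycle lemma via the $G\vee K_1$ trick, and then kills the exceptional graphs by comparing their spectral radii with $n-3$). Your $i=2$ branch is sound, and in fact your closing worry about it is unfounded: once $e(G)=e(H_{n,2})$, every degree must meet its Chv\'atal cap, so the degree sequence is exactly $(1,1,n-3,\dots,n-3,n-1)$, and this determines $G=K_1\vee(K_{n-3}+2K_1)$ by pure combinatorics; no Perron-eigenvector or quotient-matrix argument is needed. (Also, the gap acknowledged in the paper's footnote concerns Theorem 2, the Hamilton-cycle result, not this theorem.)

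The genuine gap is in your bookkeeping for $i\ge 3$. From $2e(H_{n,i})=n^2-(2i+1)n+3i^2-i$ one gets $e(H_{n,i})-e(H_{n,2})=\tfrac{1}{2}(i-2)(3i+5-2n)$, so your claim that the maximum sits at $i=2$ for all $n\ge 7$ is false: for $n=8$ the maximizer is $i=4$, where $H_{8,4}=K_3\vee 5K_1$ has $18>17=e(H_{8,2})$ edges. More importantly, the cases surviving the comparison with the spectral lower bound $e(G)>\tfrac{1}{2}(n^2-5n+8)$ are not only those with $n=6$: they are exactly $(n,i)\in\{(6,3),(7,3),(8,4),(10,5)\}$, which produce the further candidates $K_2\vee(K_2+3K_1)$ at $n=7$; $K_3\vee 5K_1$, $K_1\vee K_{2,5}$, $K_2\vee(K_{1,4}+K_1)$ at $n=8$; and $K_4\vee 6K_1$ at $n=10$, in addition to the $n=6$ graphs $K_{2,4}$, $K_2\vee 4K_1$, $K_1\vee(K_{1,3}+K_1)$. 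Each of these is non-traceable and has too many edges to be excluded by your threshold argument, so each must be ruled out by checking $\rho\le n-3$ individually (they all pass: e.g.\ $\rho(K_3\vee 5K_1)=5=n-3$ exactly, and $\rho(K_4\vee 6K_1)\approx 6.62<7$) --- this is precisely the paper's exceptional list $\mathcal{G}_1$ and its Table 2. As written, your proof skips $n=7,8,10$ entirely; note also that at $n=8$ the edge count $m=17$ (slack $2$ below the cap) is possible, so the hand-check there is a genuine enumeration of graphic sequences realizing non-traceable graphs, not just the single cap-attaining graph.
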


The following two previous results will be proved as corollaries of Theorem \ref{tNG1} in Section 3.

\begin{corollary}[Fiedler and Nikiforov \cite{Fiedler_Nikiforov}]\label{tFN}
Let $G$ be a graph on $n$ vertices. If $\rho(G)\geq n-2$, then $G$ contains a Hamilton path unless $G=K_{n-1}+K_1$.
\end{corollary}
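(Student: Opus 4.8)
The plan is to derive the corollary from Theorem \ref{tNG1} by first disposing of the degenerate cases and then using the stronger spectral hypothesis to eliminate the three exceptional graphs. Throughout I will use the standard fact that a graph $H$ on $m$ vertices satisfies $\rho(H)\le m-1$, with equality if and only if $H=K_m$.

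First I would handle the case $\delta(G)=0$. If $G$ has an isolated vertex $v$, then $v$ forms a component of spectral radius $0$, so $\rho(G)=\rho(G-v)$. Since $G-v$ has $n-1$ vertices, $\rho(G-v)\le n-2$ with equality only for $K_{n-1}$. Combined with $\rho(G)\ge n-2$ this forces $G-v=K_{n-1}$, hence $G=K_{n-1}+K_1$, which is exactly the stated exception (and genuinely has no Hamilton path). The small cases $n\le 3$ I would dispatch by hand, checking that they introduce no exception beyond $K_{n-1}+K_1$. This reduces matters to $\delta(G)\ge 1$ and $n\ge 4$.

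In this main range, since $n-2>n-3$, the hypothesis $\rho(G)\ge n-2$ already gives $\rho(G)>n-3$, so Theorem \ref{tNG1} applies and yields a Hamilton path unless $G\in\{G^{(1)}_n,\,K_2\vee 4K_1,\,K_1\vee(K_{1,3}+K_1)\}$. It therefore remains to show that each of these three graphs has spectral radius strictly below $n-2$, so that none of them can satisfy $\rho(G)\ge n-2$. For this I would invoke the elementary bound $\rho(G)^2\le\max_{v}\sum_{u\sim v}d_u$, which follows from reading off the row sums of $A^2$. For $G^{(1)}_n=K_1\vee(K_{n-3}+2K_1)$ a direct computation shows that every vertex has neighbourhood-degree sum at most $n^2-6n+11$, whence $\rho(G^{(1)}_n)^2\le n^2-6n+11<(n-2)^2$ precisely when $n\ge 4$. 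The remaining two graphs both have $n=6$, and the same bound gives $\rho^2\le 13<16=(n-2)^2$ in each case (alternatively one may simply read the eigenvalue off the equitable quotient matrix of each graph).

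The argument has no deep obstacle; the only real work is the bookkeeping in the final step, where one must verify the neighbourhood-degree sums for all three exceptional graphs and confirm the strict inequality against $(n-2)^2$ for every admissible $n$. Once these three comparisons are in hand, the corollary follows at once by combining the $\delta=0$ analysis with the application of Theorem \ref{tNG1}.
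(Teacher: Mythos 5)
Your proposal is correct, and its skeleton is the same as the paper's: split on $\delta(G)=0$ versus $\delta(G)\geq 1$, apply Theorem \ref{tNG1} in the latter case, and then rule out the three exceptional graphs by showing each has spectral radius below $n-2$. The genuine difference is the tool used for that last elimination step. The paper disposes of $K_2\vee 4K_1$ and $K_1\vee(K_{1,3}+K_1)$ by quoting the numerical values in Table 2, and disposes of $G^{(1)}_n$ by appealing to the fact (cited from Lu et al.) that $\rho(G^{(1)}_n)$ is the largest zero of $x^3-(n-4)x^2-(n-1)x+2(n-4)$ and checking that this root is below $n-2$. You instead use the elementary bound $\rho(G)^2\leq \max_v \sum_{u\sim v}d_u$ (the row sums of $A^2$), which handles all three graphs uniformly and by hand: the computation $\max_v\sum_{u\sim v}d_u = n^2-6n+11 < (n-2)^2$ for $G^{(1)}_n$ is correct for all $n\geq 4$, and the sporadic $n=6$ graphs give sums $13$ and $11$, both below $16$. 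This buys self-containedness — no characteristic polynomial, no numerical tables, no citation — at the cost of a short degree-sum bookkeeping argument; it also makes the dependence on $n$ transparent, since the inequality $n^2-6n+11<(n-2)^2$ reduces to $2n>7$. Your treatments of $\delta(G)=0$ (isolated-vertex removal plus maximality of $K_{n-1}$) and the paper's (Perron--Frobenius comparison with $K_{n-1}+K_1$) are the same idea in different dress, and your explicit check of $n\leq 3$ patches a small-$n$ point the paper passes over silently.
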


\begin{corollary}[Lu, Liu and Tian \cite{Lu_Liu_Tian}]\label{tLLT}
Let $G$ be a connected graph on $n\geq 7$ vertices. If $\rho(G)\geq \sqrt{(n-3)^2+2}$, then $G$ contains a Hamilton path.
\end{corollary}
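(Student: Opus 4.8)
The plan is to derive Corollary~\ref{tLLT} directly from Theorem~\ref{tNG1}, so the entire argument reduces to checking the hypotheses and eliminating the three exceptional graphs. First I would observe that a connected graph on $n\ge 7\ge 4$ vertices automatically has $\delta\ge 1$, and that $\sqrt{(n-3)^2+2}>n-3$, so the assumption $\rho(G)\ge\sqrt{(n-3)^2+2}$ yields $\rho(G)>n-3$. Thus Theorem~\ref{tNG1} applies and guarantees that $G$ has a Hamilton path unless $G\in\{G^{(1)}_n,\,K_2\vee 4K_1,\,K_1\vee(K_{1,3}+K_1)\}$. It therefore suffices to show that none of these three graphs can be our $G$.

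Two of the exceptions are dealt with by counting vertices: both $K_2\vee 4K_1$ and $K_1\vee(K_{1,3}+K_1)$ have exactly $6$ vertices, so neither can equal $G$ once $n\ge 7$. The remaining graph $G^{(1)}_n=K_1\vee(K_{n-3}+2K_1)$ has exactly $n$ vertices, so I cannot dismiss it on vertex count; instead I would prove that its spectral radius is strictly smaller than the threshold $\sqrt{(n-3)^2+2}$, which contradicts $\rho(G)\ge\sqrt{(n-3)^2+2}$ and hence rules out $G=G^{(1)}_n$.

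To compute $\rho(G^{(1)}_n)$ I would use the equitable partition of $V(G^{(1)}_n)$ into the apex vertex, the clique $K_{n-3}$, and the two remaining (independent) vertices. The associated quotient matrix is
\[
B=\begin{pmatrix} 0 & n-3 & 2\\ 1 & n-4 & 0\\ 1 & 0 & 0\end{pmatrix},
\]
whose eigenvalues form a subset of the adjacency eigenvalues, with the Perron root preserved; in particular $\rho(G^{(1)}_n)$ is the largest root of $f(x)=x^3-(n-4)x^2-(n-1)x+(2n-8)$. Writing $t=\sqrt{(n-3)^2+2}$, a short computation gives $f(n-3)=-2<0$, while substituting $t^2=(n-3)^2+2$ and using $t>n-3$ and $(n-3)(n-4)>0$ yields $f(t)>(n-3)^2(n-4)-n^3+10n^2-33n+36=0$ for $n\ge 7$.

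The step I expect to require the most care is converting these two sign evaluations into the inequality $\rho(G^{(1)}_n)<t$, since a monic cubic is also positive between its two smaller roots, so $f(t)>0$ by itself does not place $t$ to the right of the largest root. To settle this I would note that $\det B=-2(n-4)<0$, so the product of the three (real) roots is negative; since the Perron root $\rho(G^{(1)}_n)>0$, the other two roots have opposite signs, giving one negative root and two positive roots $0<r_2\le r_3=\rho(G^{(1)}_n)$ (here $f(0)=2n-8\ne0$ rules out a zero root). Because $f(n-3)<0$ and $n-3$ exceeds the negative root, $n-3$ must lie in the interval $(r_2,r_3)$; then $t>n-3>r_2$ together with $f(t)>0$ forces $t>r_3=\rho(G^{(1)}_n)$. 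This eliminates the last exceptional graph and completes the deduction of Corollary~\ref{tLLT}.
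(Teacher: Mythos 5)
Your proposal is correct and takes essentially the same route as the paper: deduce $\rho(G)>n-3$, apply Theorem~\ref{tNG1}, discard $K_2\vee 4K_1$ and $K_1\vee(K_{1,3}+K_1)$ because they have only $6$ vertices, and rule out $G^{(1)}_n$ by showing its spectral radius (the largest root of $x^3-(n-4)x^2-(n-1)x+2(n-4)$, the same cubic the paper cites) is below $\sqrt{(n-3)^2+2}$. The only difference is that you fully carry out the step the paper dismisses as ``easy to check,'' and your details are sound: $f(n-3)=-2<0$, $f(t)=(n-3)(n-4)\bigl(t-(n-3)\bigr)>0$, and the sign/root-location argument correctly converts these into $\rho(G^{(1)}_n)<t$.
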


\begin{remark}
The original version of Corollary \ref{tLLT} (Theorem 3.4 in \cite{Lu_Liu_Tian}) uses the restriction $n\geq 5$. Note that $\rho(K_2\vee 4K_1)\approx3.3723>\sqrt{3^2+2}\approx 3.3166$ and $K_2\vee 4K_1$ contains no Hamilton path. We point out that the restriction should be $n\geq 7$.
\end{remark}

\begin{corollary}
Let $G$ be a graph on $n\geq 7$ vertices with $\delta\geq 1$. Let $G^{(1)}_n=K_1\vee (K_{n-3}+2K_1)$. If $\rho(G)\geq \rho(G^{(1)}_n)$, then $G$ contains a Hamilton path unless $G= G^{(1)}_n$.
\end{corollary}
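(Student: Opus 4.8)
The plan is to deduce this statement directly from Theorem \ref{tNG1}, so the work reduces to (i) checking that $G^{(1)}_n$ sits strictly above the threshold $n-3$ and (ii) discarding the two sporadic exceptions of Theorem \ref{tNG1} once $n\geq 7$. First I would record the strict inequality $\rho(G^{(1)}_n)>n-3$. To see this, note that $G^{(1)}_n=K_1\vee(K_{n-3}+2K_1)$ contains $K_{n-2}$ as a subgraph, realised by the clique $K_{n-3}$ together with the dominating vertex of the join. Since $G^{(1)}_n$ is connected and $K_{n-2}$ is a \emph{proper} subgraph, the standard Perron--Frobenius monotonicity of the spectral radius (a proper subgraph of a connected graph has strictly smaller spectral radius) yields $\rho(G^{(1)}_n)>\rho(K_{n-2})=n-3$.

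With this in hand, the hypothesis $\rho(G)\geq\rho(G^{(1)}_n)$ gives $\rho(G)>n-3$, so $G$ meets the spectral hypothesis of Theorem \ref{tNG1}; the minimum-degree assumption $\delta\geq 1$ carries over verbatim. Theorem \ref{tNG1} then forces $G$ to contain a Hamilton path unless $G$ is one of $G^{(1)}_n$, $K_2\vee 4K_1$, or $K_1\vee(K_{1,3}+K_1)$. The second step is to eliminate the last two possibilities by a vertex count: $K_2\vee 4K_1$ has $2+4=6$ vertices, and $K_1\vee(K_{1,3}+K_1)$ has $1+(4+1)=6$ vertices, whereas $G$ has $n\geq 7$ vertices. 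Hence $G$ can equal neither, and the only surviving exception is $G=G^{(1)}_n$, which is precisely the desired conclusion.

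I expect no genuine obstacle here; the argument is a short reduction. The only point deserving a moment's care is the strict inequality $\rho(G^{(1)}_n)>n-3$ in the first step. Should one prefer a fully self-contained verification in place of the subgraph-monotonicity appeal, one could instead evaluate the characteristic polynomial of (a suitable quotient matrix of) $G^{(1)}_n$ at $n-3$ and check its sign, or exhibit the Perron vector using the equitable partition into dominating vertex, clique, and pendant classes; but the monotonicity argument is the cleanest and avoids any computation.
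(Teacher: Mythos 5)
Your proposal is correct and is essentially the paper's own argument: the corollary is derived directly from Theorem \ref{tNG1}, with the key bound $\rho(G^{(1)}_n)>\rho(K_{n-2})=n-3$ obtained by the very same Perron--Frobenius proper-subgraph observation that the paper states parenthetically at the end of its proof of Theorem \ref{tNG1}. Your elimination of $K_2\vee 4K_1$ and $K_1\vee(K_{1,3}+K_1)$ by counting vertices (both have only $6$ vertices, while $n\geq 7$) is likewise the paper's step, mirroring the line ``Since the order $n\geq 7$, $G=G^{(1)}_n$'' in its proof of Corollary \ref{tLLT}.
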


\begin{theorem}\label{tNG2}
Let $G$ be a graph on $n$ vertices with $\delta\geq 2$. Let $G^{(2)}_n=K_2\vee (K_{n-4}+2K_1)$. (1) Suppose $n\geq 14$. If $\rho(G)\geq \rho(G^{(2)}_n)$, then $G$ contains a Hamilton cycle unless $G=G^{(2)}_n$. (2) When $n=7$, $\rho(K_3\vee 4K_1)>\rho(G^{(2)}_7)$ and $K_3\vee 4K_1$ contains no Hamilton cycle. When $n=9$, $\rho(K_4\vee 5K_1)>\rho(G^{(2)}_{9})$ and $K_4\vee 5K_1$ contains no Hamilton cycle.
\end{theorem}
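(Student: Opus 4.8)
The plan is to prove part (1) by contradiction, combining Chv\'atal's classical degree-sequence criterion for Hamiltonicity with a known upper bound for $\rho$ in terms of the number of edges and the minimum degree; part (2) is a direct computation that I will address at the end. So suppose $G$ is a non-Hamiltonian graph on $n\ge 14$ vertices with $\delta\ge 2$, with $\rho(G)\ge\rho(G^{(2)}_n)$, and with $G\ne G^{(2)}_n$, and seek a contradiction. Since $G$ has no Hamilton cycle, Chv\'atal's theorem supplies an integer $i$ with $1\le i<n/2$ such that $d_i\le i$ and $d_{n-i}\le n-i-1$. Because $\delta\ge 2$ gives $d_1\ge 2$, necessarily $i\ge 2$, and since $d_i\ge d_1=\delta$ we also record $i\ge\delta$.

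Next I would bound the number of edges by grouping the ordered degrees into the blocks $d_1,\dots,d_i$ (each $\le i$), $d_{i+1},\dots,d_{n-i}$ (each $\le n-i-1$), and $d_{n-i+1},\dots,d_n$ (each $\le n-1$), which yields
$$2e(G)=\sum_{j=1}^n d_j\le i^2+(n-2i)(n-i-1)+i(n-1)=3i^2+(1-2n)i+(n^2-n)=:f(i).$$
A short check shows that $f(i)/2$ is exactly the number of edges of the threshold graph $K_i\vee(K_{n-2i}+iK_1)$ realizing these bounds, whose member at $i=2$ is precisely $G^{(2)}_n$.

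I would then feed this into the inequality $\rho(G)\le \tfrac{\delta-1}{2}+\sqrt{2e(G)-\delta n+(\delta+1)^2/4}$, valid for any graph of minimum degree $\delta$ (and tight for $K_n$), to obtain $\rho(G)\le\Phi(i,\delta)$ for an explicit $\Phi$ that is increasing in the edge count. The heart of the argument is to show that for $n\ge 14$ and every admissible pair with $3\le i<n/2$ and $2\le\delta\le i$ one has $\Phi(i,\delta)<\rho(G^{(2)}_n)$, contradicting $\rho(G)\ge\rho(G^{(2)}_n)$. Here I would exploit that $f(i)$ is a convex (U-shaped) function of $i$, hence on the range $i\ge 3$ it is maximized at one of the endpoints $i=3$ or $i=\lfloor(n-1)/2\rfloor$; together with the monotonicity of $\Phi$ this reduces the whole range to finitely many explicit comparisons against $\rho(G^{(2)}_n)$, which itself I would evaluate as the largest root of the characteristic polynomial of the $3\times 3$ quotient matrix $\bigl(\begin{smallmatrix}1&n-4&2\\2&n-5&0\\2&0&0\end{smallmatrix}\bigr)$ of the equitable partition of $G^{(2)}_n$. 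The hypothesis $n\ge 14$ is exactly what is needed to make these comparisons go through.

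I expect the main obstacle to be the boundary case $i=2$, which is presumably the source of the gap noted in the footnote: there the edge bound gives only $e(G)\le e(G^{(2)}_n)$, and the spectral inequality $\Phi(2,2)$ already exceeds $\rho(G^{(2)}_n)$, so it cannot by itself exclude graphs near $G^{(2)}_n$. In this case $\delta\ge 2$ forces $d_1=d_2=2$ and $d_{n-2}\le n-3$, so $G$ is tightly constrained, and I would argue via the Perron vector together with an edge-shifting (local switching) comparison that among all non-Hamiltonian graphs meeting these exact degree constraints the spectral radius is \emph{uniquely} maximized by $G^{(2)}_n$; hence $\rho(G)\ge\rho(G^{(2)}_n)$ forces $G=G^{(2)}_n$, the desired contradiction. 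Finally, part (2) needs no new ideas: for $n\in\{7,9\}$ I would compute $\rho(K_3\vee 4K_1)$, $\rho(K_4\vee 5K_1)$ from their $2\times 2$ quotient matrices and $\rho(G^{(2)}_7)$, $\rho(G^{(2)}_9)$ from their $3\times 3$ quotient matrices and verify the stated strict inequalities, while the non-Hamiltonicity of $K_3\vee 4K_1$ and $K_4\vee 5K_1$ follows from the cut condition (deleting the clique side leaves more components than its size); these examples also make transparent why the threshold $n\ge 14$ in part (1) cannot be lowered naively.
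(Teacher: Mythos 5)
Your reduction for $i\ge 3$ is sound: the Chv\'atal violation gives $2m\le n^2-5n+14-(i-2)(2n-3i-7)$, and for $n\ge 14$, $3\le i<n/2$ one checks $(i-2)(2n-3i-7)\ge 4$, so Nikiforov's bound yields $\rho(G)\le n-3<\rho(G^{(2)}_n)$, a contradiction; likewise the subcase $i=2$ with $2m=n^2-5n+14$ forces the rigid degree sequence $(2,2,n-3,\dots,n-3,n-1,n-1)$ and hence $G=G^{(2)}_n$. The genuine gap is exactly where you suspect it, namely $i=2$ with $m=\binom{n-2}{2}+3$ (parity excludes $\binom{n-2}{2}+2$, since $2m>n^2-5n+10$ and both sides are even). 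Your proposed resolution there --- that a Perron-vector/edge-shifting comparison shows $\rho$ is uniquely maximized by $G^{(2)}_n$ among non-Hamiltonian graphs with these degree constraints --- is not an argument but a restatement of the theorem in this case, and the tools you name do not apply. A concrete witness of the difficulty is $K_1\vee(K_{n-3}+K_2)$: it is non-Hamiltonian, has $\delta=2$ and exactly $\binom{n-2}{2}+3$ edges, and satisfies $d_1=d_2=2$, $d_{n-2}=n-3$, so it lies squarely in your remaining case. It is not a subgraph of $G^{(2)}_n$ (its two degree-$2$ vertices are adjacent, those of $G^{(2)}_n$ are not), it is not related to $G^{(2)}_n$ by any edge shift (the edge counts differ), and the required inequality $\rho(K_1\vee(K_{n-3}+K_2))<\rho(G^{(2)}_n)$ is razor-thin: both radii lie in $\bigl(n-3+2/n^2,\;n-3+8/(n^2-5n+2)\bigr)$, i.e.\ they differ by $O(1/n^2)$. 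The paper devotes its entire Appendix A to this single comparison, via root location for the cubics $x^3-(n-4)x^2-(n+1)x+4(n-5)$ and $x^3-(n-3)x^2-3x+3n-11$.

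Moreover, even granting that one comparison, your remaining case contains further graphs (several degree sequences are possible, e.g.\ $(2,2,n-3,\dots,n-3,n-2,n-2)$, each with several realizations), and for each you must either exhibit a Hamilton cycle or prove a spectral comparison; nothing in the proposal does this. This is precisely the content the paper supplies by a different route: it takes a longest cycle $C$, shows $c\in\{n-3,n-2,n-1\}$ via Erd\H{o}s--Gallai, uses cycle-exchange inequalities (Claim 1) to bound $m$ in each case, and in the critical case $c=n-1$, $r=2$ carries out the structural analysis that either constructs a Hamilton cycle explicitly or shows $G$ is a proper subgraph of $G^{(2)}_n$ (so Perron--Frobenius applies), with $K_1\vee(K_{n-3}+K_2)$ handled separately in the appendix. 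Without an argument of comparable substance for your $i=2$ case, your proposal does not close the very gap that the authors' footnote says existed in the published version. (Your part (2), the quotient-matrix computations, and the non-Hamiltonicity certificates are fine.)
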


Our Theorem \ref{tNG2} refines the following theorem.

\begin{theorem}[Fiedler and Nikiforov \cite{Fiedler_Nikiforov}]\label{tFN}
Let $G$ be a graph on $n$ vertices. If $\rho(G)>n-2$, then $G$ contains a Hamilton cycle unless $G=K_1\vee (K_{n-2}+K_1)$.
\end{theorem}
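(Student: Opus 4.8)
The plan is to proceed by contraposition: assume $G$ has no Hamilton cycle and $\rho(G)>n-2$, and show that $G$ must be exactly $K_1\vee(K_{n-2}+K_1)$. The strategy is to pinch the edge count $m=|E(G)|$ from two sides. From the spectral side I would use a classical upper bound for $\rho$ in terms of $m$; from the extremal side I would use the classical maximum of $m$ over non-Hamiltonian graphs. The two bounds force $m$ to a single value, and then one identifies the extremal graph. First I would record that $\rho(G)>n-2$ makes $G$ connected: a graph on $k$ vertices has spectral radius at most $k-1$, so if $G$ were disconnected its largest component would have at most $n-1$ vertices and hence $\rho(G)\le n-2$.

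Since $G$ is connected, Hong's inequality $\rho(G)\le\sqrt{2m-n+1}$ applies. From $\rho(G)>n-2$ I get $2m-n+1>(n-2)^2$, that is $2m>n^2-3n+3$; because $n^2-3n=n(n-3)$ is even, this sharpens to $2m\ge n^2-3n+4$, i.e. $m\ge\binom{n-1}{2}+1$. On the other hand, by the edge version of Ore's condition, every non-Hamiltonian graph on $n$ vertices satisfies $m\le\binom{n-1}{2}+1$. Hence $m=\binom{n-1}{2}+1$ exactly.

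It then remains to identify the non-Hamiltonian graphs attaining $m=\binom{n-1}{2}+1$. Here I would invoke the minimum-degree refinement due to Erd\H{o}s: for $\delta\ge 2$ the maximum number of edges of a non-Hamiltonian graph drops strictly below $\binom{n-1}{2}+1$ (already the $\delta=2$ extremal graph $K_2\vee(K_{n-4}+2K_1)$ has only $\tfrac12(n^2-5n+14)$ edges, which is smaller once $n\ge 6$). Consequently an extremal $G$ must have a vertex of degree $1$. Passing to the complement $\overline{G}$, this vertex has degree $n-2$ in $\overline{G}$; and since $\overline{G}$ has only $\binom{n}{2}-\binom{n-1}{2}-1=n-2$ edges, all of them form a star centred at that vertex while the remaining vertex of $\overline{G}$ is isolated. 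Thus $\overline{G}=K_{1,n-2}+K_1$, the complement of $K_1\vee(K_{n-2}+K_1)$, giving $G=K_1\vee(K_{n-2}+K_1)$. Finally I would verify that this graph is a genuine exception: it contains $K_{n-1}$ as a proper connected subgraph and is itself connected, so $\rho>\rho(K_{n-1})=n-2$, while its pendant vertex forbids any Hamilton cycle.

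The main obstacle is the uniqueness step, not the pinching of $m$. Uniqueness of the extremal graph holds for $n\ge 6$ by the Erd\H{o}s estimate, but the finitely many small cases $n\le 5$ must be checked by hand; in particular for $n=5$ the graph $K_2\vee 3K_1$ also attains $m=\binom{n-1}{2}+1$ and is non-Hamiltonian, so one genuinely needs the \emph{strictness} $\rho(G)>n-2$ to exclude it, since $\rho(K_2\vee 3K_1)=3=n-2$. An alternative route that covers the bulk of the cases is available inside this paper: when $\delta(G)\ge 2$ and $n\ge 14$ one may instead apply Theorem \ref{tNG2}, because $\rho(G^{(2)}_n)<n-2$ (indeed $\rho^2\le\max_u\sum_{v\sim u}d_v=n^2-6n+15<(n-2)^2$ for $n\ge 6$), so $\rho(G)>n-2$ yields $\rho(G)\ge\rho(G^{(2)}_n)$ with $G\ne G^{(2)}_n$ and hence a Hamilton cycle; only the degree-one case then needs the complement argument above.
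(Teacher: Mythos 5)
Your overall strategy coincides with the one this paper points to for Theorem~\ref{tFN}: pinch the edge number between a spectral upper bound and the Ore--Bondy extremal bound. The paper does not reprove the theorem; it remarks that Theorem~\ref{leBon} ``supports'' the proof in \cite{Fiedler_Nikiforov}, and that route is shorter than yours. Once connectivity and Hong's inequality give $m\ge\binom{n-1}{2}+1$, Theorem~\ref{leBon} already says a non-Hamiltonian $G$ must be $K_1\vee(K_{n-2}+K_1)$ or $K_2\vee 3K_1$, and the latter is excluded by strictness, since $\rho(K_2\vee 3K_1)=3=n-2$. You instead use only the inequality half of Ore's theorem and rebuild the characterization of the equality case yourself (Erd\H{o}s's minimum-degree bound to force a vertex of degree one, then the complement/star argument). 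That is a genuinely different, self-contained uniqueness step, and your opening observation that $\rho(G)>n-2$ forces connectivity is a necessary point that is easy to overlook: Hong's inequality can fail for graphs with isolated vertices (e.g.\ $K_{n-1}+K_1$ has $\rho=n-2>\sqrt{2m-n+1}$). What the paper's route buys is brevity and automatic coverage of all $n\ge 3$; what yours buys is independence from the harder, equality half of Bondy's characterization.

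Two caveats on your uniqueness step. First, your justification of the Erd\H{o}s bound is not right as written: $K_2\vee(K_{n-4}+2K_1)$ is \emph{not} the extremal non-Hamiltonian graph with $\delta\ge 2$ in the range $7\le n\le 11$; for instance, at $n=7$ the graph $K_3\vee 4K_1$ has $15>\binom{5}{2}+4=14$ edges (this is exactly why Lemma~\ref{leNG1} of the paper must carry the exceptional family $\mathcal{G}_2$). Erd\H{o}s's theorem bounds $m$ by the maximum of \emph{two} quantities, so you must also check the balanced term $\binom{\lceil (n+1)/2\rceil}{2}+\lfloor(n-1)/2\rfloor^2$; it does stay below $\binom{n-1}{2}+1$ for all $n\ge 6$ (at $n=7$: $15<16$; at $n=9$: $26<29$; at $n=11$: $40<46$), so your conclusion survives, but the step needs this extra verification to be complete. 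Second, the cases $n=3,4$ are left implicit; they are trivial (the only candidates are $P_3$ and the paw $K_1\vee(K_2+K_1)$, both of the claimed exceptional form), but should be stated. Finally, your alternative ending via Theorem~\ref{tNG2} is legitimate and not circular (its proof never uses Theorem~\ref{tFN}), but as set up it still leaves $\delta\ge 2$ with $6\le n\le 13$ uncovered, so it can supplement, not replace, the Erd\H{o}s step.
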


\section{Preliminaries}
\subsection{Some Lemmas}
Before proving the main results, we list some useful lemmas. The first one is due to Chv\'atal.

\begin{lemma}[Chv\'atal \cite{Chvatal}]\label{lechv}
Let $G$ be a graph with the degree sequence $(d_1,d_2,\ldots,d_n)$, where $d_1\leq d_2\leq\ldots\leq d_n$ and $n\geq 3$. If there is no integer $k<n/2$ such that $d_k\leq k$ and $d_{n-k}\leq n-k-1$, then $G$ contains a Hamilton cycle.
\end{lemma}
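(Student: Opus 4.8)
The plan is to prove the contrapositive by the classical edge-maximal argument, so I would assume $G$ satisfies the stated degree hypothesis but, for contradiction, fails to contain a Hamilton cycle. First I would record that the hypothesis is monotone under edge addition: if $H$ arises from $G$ by adding edges, then the sorted degree sequence of $H$ dominates that of $G$ coordinatewise, and since the hypothesis is equivalent to ``for every $k<n/2$, $d_k\geq k+1$ or $d_{n-k}\geq n-k$'', each disjunct only becomes easier to satisfy when degrees grow. Hence I may add edges one at a time, preserving non-Hamiltonicity, until I reach an edge-maximal non-Hamiltonian graph $G^{\ast}$ on the same vertex set, and $G^{\ast}$ still satisfies the hypothesis. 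Since $K_n$ is Hamiltonian for $n\geq 3$, the graph $G^{\ast}$ is not complete, so it has at least one non-adjacent pair of vertices.

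Next I would exploit maximality. For non-adjacent $u,v$ in $G^{\ast}$, adding $uv$ creates a Hamilton cycle that must use $uv$, so $G^{\ast}$ has a Hamilton path $x_1x_2\cdots x_n$ with $x_1=u$ and $x_n=v$. The standard rotation argument shows that no index $i$ can have both $ux_{i+1}\in E(G^{\ast})$ and $vx_i\in E(G^{\ast})$, for otherwise $x_1\cdots x_i x_n x_{n-1}\cdots x_{i+1}x_1$ is a Hamilton cycle. Writing $S=\{i:ux_{i+1}\in E\}$ and $T=\{i:vx_i\in E\}$, both are disjoint subsets of $\{1,\dots,n-1\}$, whence $d(u)+d(v)=|S|+|T|\leq n-1$. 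Thus every non-adjacent pair has degree sum at most $n-1$.

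Among all non-adjacent pairs I would then choose one, say $u,v$ with $d(u)\leq d(v)$, maximizing $d(u)+d(v)$, and set $k=d(u)$. From $2k\leq d(u)+d(v)\leq n-1$ I obtain $k<n/2$ and $d(v)\leq n-1-k$. The heart of the argument is a counting step. Every vertex $w\neq v$ non-adjacent to $v$ satisfies $d(w)\leq d(u)=k$ by maximality of the chosen pair, and there are at least $n-1-d(v)\geq k$ such vertices, so at least $k$ vertices have degree $\leq k$, giving $d_k\leq k$. Symmetrically, every vertex $w\neq u$ non-adjacent to $u$ satisfies $d(w)\leq d(v)\leq n-1-k$, and together with $u$ itself (whose degree $k\leq n-1-k$ since $2k\leq n-1$) this yields at least $(n-1-k)+1=n-k$ vertices of degree $\leq n-1-k$, hence $d_{n-k}\leq n-k-1$. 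With $k<n/2$ this exhibits exactly the forbidden pattern, contradicting the hypothesis and completing the proof.

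I expect the counting step of the last paragraph to be the main obstacle: one must track carefully which vertices are counted (non-neighbours of $v$ versus non-neighbours of $u$, and whether $u$ and $v$ themselves are included), confirm that the index $k=d(u)$ really lies below $n/2$, and check that the two inequalities $d_k\leq k$ and $d_{n-k}\leq n-k-1$ align precisely with the quantifier in the statement. The preliminary monotonicity remark is routine but essential, since without it the degree hypothesis could be destroyed on passing to the edge-maximal supergraph $G^{\ast}$.
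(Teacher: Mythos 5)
Your proof is correct. Note that the paper does not prove this lemma at all: it is quoted as a known result with a citation to Chv\'atal, so there is no internal proof to compare against. What you have written is essentially Chv\'atal's original argument (the one reproduced in Bondy--Murty): monotonicity of the degree condition under edge addition, passage to an edge-maximal non-Hamiltonian supergraph $G^{\ast}$, the rotation argument giving $d(u)+d(v)\leq n-1$ for a non-adjacent pair maximizing the degree sum, and the two counting claims producing the forbidden index $k=d(u)<n/2$ with $d_k\leq k$ and $d_{n-k}\leq n-k-1$. All the delicate points you flagged are handled correctly: the count $n-1-d(v)\geq k$ of non-neighbours of $v$, the inclusion of $u$ itself in the second count via $k\leq n-1-k$, and the fact that $k\geq 1$ in $G^{\ast}$ (every vertex of a non-adjacent pair in an edge-maximal non-Hamiltonian graph lies on a Hamilton path, hence has positive degree), so the exhibited $k$ is a legitimate integer below $n/2$.
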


The following result is due to Ore \cite{Ore} and Bondy \cite{Bondy}, independently. Note that it supports the proof of Theorem \ref{tFN} (see Fact 1 in \cite{Fiedler_Nikiforov}).

\begin{theorem}[Ore \cite{Ore}, Bondy \cite{Bondy}]\label{leBon}
Let $G$ be a graph on $n\geq 3$ vertices and $m$ edges. If $m\geq\binom{n-1}{2}+1$, then $G$ contains a Hamilton cycle unless $G\in \mathcal{\mathcal{G}}=\{K_1\vee(K_{n-2}+K_1),K_2\vee{3K_1}\}$.
\end{theorem}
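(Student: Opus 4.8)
The plan is to derive the result from Chv\'atal's degree condition (Lemma~\ref{lechv}) by a counting argument, and then to pin down the extremal graphs through an equality analysis. Suppose $G$ has $m\ge\binom{n-1}{2}+1$ edges but contains no Hamilton cycle. Applying the contrapositive of Lemma~\ref{lechv}, there is an integer $k$ with $1\le k<n/2$ such that $d_k\le k$ and $d_{n-k}\le n-k-1$. Since the degrees are non-decreasing, I would bound the degree sum by splitting the sequence into three blocks: the first $k$ degrees are each at most $k$, the next $n-2k$ degrees are each at most $n-k-1$, and the last $k$ degrees are each at most $n-1$. This yields
$$2m\;\le\;k^2+(n-2k)(n-k-1)+k(n-1)\;=:\;f(k).$$

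Next I would combine this with the hypothesis $2m\ge (n-1)(n-2)+2=f(1)$, reducing everything to the behaviour of the quadratic $f(k)-f(1)=3(k-1)\bigl(k-\tfrac{2n-4}{3}\bigr)$ on the admissible range $1\le k<n/2$. For $n\ge 6$ this quantity is negative for every $k\ge 2$ in range, so the witnessing index must be $k=1$; for $n=5$ both $k=1$ and $k=2$ can occur, and the tiny cases $n\in\{3,4\}$ only allow $k=1$. In every surviving case the two displayed degree inequalities must hold with equality, which forces the full degree sequence: with $k=1$ it is $(1,n-2,\ldots,n-2,n-1)$, while with $k=2$ (only $n=5$) it is $(2,2,2,4,4)$.

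Finally I would reconstruct $G$ from these forced degree sequences. In the $k=1$ case, let $v$ be the vertex of degree $n-1$ and $u$ the vertex of degree $1$; since $v$ is universal, the unique neighbour of $u$ is $v$, so $u$ is adjacent to none of the remaining $n-2$ vertices. Each such vertex has degree $n-2$, is adjacent to $v$ but not to $u$, hence is adjacent to all $n-3$ of the others; therefore these $n-2$ vertices induce a clique and $G=K_1\vee(K_{n-2}+K_1)$. The analogous argument in the $k=2$, $n=5$ case gives $G=K_2\vee 3K_1$, which completes the identification of $\mathcal{G}$.

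I expect the main obstacle to be twofold. First, the quadratic analysis must be carried out carefully enough to isolate exactly which pairs $(n,k)$ survive, and in particular to catch that $n=5$ is the sole value producing the second exceptional graph $K_2\vee 3K_1$---an easy case to overlook. Second, passing from a forced degree sequence to a graph isomorphism type is not automatic, since a degree sequence need not determine the graph; the argument must exploit the specific equality structure (the universal vertex together with the degree-$1$ vertex) to force the clique and thereby the exact join decomposition.
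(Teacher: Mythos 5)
Your proof is correct. The paper does not actually prove this theorem---it quotes it as a known result of Ore and Bondy---but your route (Chv\'atal's condition from Lemma~\ref{lechv}, the three-block degree-sum bound $2m\le k^2+(n-2k)(n-k-1)+k(n-1)$, and the equality analysis that forces the degree sequences $(1,n-2,\ldots,n-2,n-1)$ and, for $n=5$, $(2,2,2,4,4)$) is exactly the technique the paper itself uses to prove its analogous Lemma~\ref{leNG1}, so this is essentially the same approach rather than a different one.
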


The part of the strict inequality of the following lemma is a corollary of a result due to Erd\"{o}s \cite{Erdos}. By refining the technique of Bondy  \cite{Bondy}, here we mainly characterize all the exceptional graphs when the equality holds.

\begin{lemma}\label{leNG1}
Let $G$ be a graph on $n\geq 5$ vertices and $m$ edges with $\delta\geq 2$. If $m\geq\binom{n-2}{2}+4$, then $G$ contains a Hamilton cycle unless $G\in \mathcal{\mathcal{G}}_2=\{K_2\vee (K_{n-4}+2K_1), K_3\vee 4K_1, K_2\vee (K_{1, 3}+K_1), K_1\vee K_{2,4}, K_3\vee (K_2+3K_1), K_4\vee 5K_1, K_3\vee (K_{1,4}+K_1), K_2\vee K_{2,5}, K_5\vee 6K_1\}$.
\end{lemma}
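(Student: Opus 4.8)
The plan is to follow the classical degree-sequence approach of Bondy, refined to capture the equality case precisely. First I would argue by contradiction: suppose $G$ has $m \geq \binom{n-2}{2}+4$ edges, $\delta \geq 2$, and $G$ contains no Hamilton cycle. Applying Chv\'atal's condition (Lemma \ref{lechv}), the failure of Hamiltonicity forces the existence of an integer $k < n/2$ with $d_k \leq k$ and $d_{n-k} \leq n-k-1$. Since $\delta \geq 2$ we must have $k \geq 2$. The strategy is then to bound $m$ from above in terms of $k$ using this degree information, and to compare that bound against the hypothesis $m \geq \binom{n-2}{2}+4$. Concretely, for a graph whose degree sequence satisfies $d_k \leq k$ and $d_{n-k} \leq n-k-1$, one obtains a standard upper estimate of the form
\begin{equation*}
m = \frac{1}{2}\sum_{i=1}^{n} d_i \leq \binom{n-k}{2} + k^2 + \text{(correction terms)}.
\end{equation*}
The heart of the argument is to show that for $k \geq 3$ (and $n$ not too small) this upper bound is strictly less than $\binom{n-2}{2}+4$, which yields a contradiction, leaving only the boundary value $k=2$ together with a short list of small-$n$ exceptions to handle directly.

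The main case is therefore $k=2$, where $d_1 \leq d_2 \leq 2$ and $d_{n-2} \leq n-3$. Here I would extract the exact edge-maximizing degree sequences: the constraints say at least two vertices have degree $\leq 2$ and at least $n-2$ vertices have degree $\leq n-3$. Combined with $\delta \geq 2$, this pins $d_1 = d_2 = 2$. Pushing the remaining degrees as high as the constraints permit while keeping the total at exactly $m = \binom{n-2}{2}+4$ (the equality case) forces a very rigid structure on $G$; the degree sequence essentially determines a clique-plus-join pattern, which is how the family $K_2 \vee (K_{n-4}+2K_1)$ and its relatives in $\mathcal{G}_2$ arise. The delicate part is that the degree sequence alone does not determine the graph, so I must verify that each candidate sequence is realized by a \emph{non-Hamiltonian} graph and identify exactly which graph(s) that is — this is where Bondy's technique is refined: rather than merely deriving a contradiction from the edge count, I track when the inequalities above are tight and read off the forced adjacency structure.

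I expect the main obstacle to be the equality analysis. When $m = \binom{n-2}{2}+4$ exactly, every inequality used in the edge-counting must be tight, and I have to translate those tightness conditions back into the graph's structure to show that $G$ must be isomorphic to one of the nine listed graphs. This requires checking that no tight configuration was overlooked: for each feasible extremal degree sequence I would reconstruct the graph (using that vertices of small degree cannot be adjacent to too many others, forcing an independent-set-joined-to-clique shape), confirm it is non-Hamiltonian, and confirm that it attains the edge bound. The small-$n$ graphs such as $K_3 \vee 4K_1$, $K_1 \vee K_{2,4}$, $K_4 \vee 5K_1$, and $K_5 \vee 6K_1$ appear precisely as the sporadic extremal configurations for small $k+1 = \delta$, so a careful bookkeeping of which $(n,k)$ pairs admit a tight non-Hamiltonian realization is what produces the complete list $\mathcal{G}_2$ rather than a single graph. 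The final step is routine verification that each member of $\mathcal{G}_2$ indeed has $\delta \geq 2$, has exactly $\binom{n-2}{2}+4$ edges (or more, consistent with the hypothesis), and contains no Hamilton cycle.
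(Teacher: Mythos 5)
Your plan is correct and follows essentially the same route as the paper's proof: apply Chv\'atal's degree-sequence condition to get $k<n/2$ with $d_k\leq k$, $d_{n-k}\leq n-k-1$ and $k\geq 2$, bound $m$ above in terms of $k$ (the paper's exact form is $m\leq \binom{n-2}{2}+4-\tfrac{(k-2)(2n-3k-7)}{2}$), force a contradiction for $k\geq 3$ unless $n$ is small, extract $K_2\vee(K_{n-4}+2K_1)$ from the tight $k=2$ case, and enumerate the sporadic small-$n$ degree sequences, checking each realization for Hamiltonicity since the sequence alone does not determine the graph. You correctly flag the two delicate points the paper also wrestles with (tightness analysis and ambiguous realizations such as the sequence $(4,4,4,4,4,7,7,8,8)$, which yields either a Hamiltonian graph or $K_2\vee K_{2,5}$), so the proposal matches the paper's argument in both structure and substance.
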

\begin{proof}
In the proof, we assume a sequence $\vec{d}$ is called a \emph{permissible graphic sequence} if there is a simple graph with degree sequence $\vec{d}$ satisfying the condition of Lemma \ref{leNG1}.

Suppose that $G$ has no Hamilton cycle and its degree sequence is $(d_1, d_2,\ldots ,d_n)$, where $d_1\leq d_2\leq \cdots \leq d_n$ and $n\geq 5$. By Lemma \ref{lechv}, there is an integer $k<\frac{n}{2}$ such that $d_k\leq k$ and $d_{n-k}\leq n-k-1$. Since $\delta\geq 2$, $k\geq d_k\geq \delta\geq 2$. Thus
\begin{equation*}
\begin{aligned}
m &=\frac{1}{2}\sum_{i=1}^n d_i \\
  &\leq \frac{1}{2}(k\cdot k+(n-2k)(n-k-1)+k(n-1))   \\
  &= \binom{n-2}{2}+4-\frac{(k-2)(2n-3k-7)}{2}.
\end{aligned}
\end{equation*}
Since $m\geq \binom{n-2}{2}+4$, $(k-2)(2n-3k-7)\leq 0$.

Assume that $(k-2)(2n-3k-7)=0$, i.e., $k=2$ or $2n-3k-7=0$. Then
$m= \binom{n-2}{2}+4$ and all inequalities in the above argument
should be equalities. If $k=2$, then $G$ is a graph with $d_1=d_2=2$, $d_3=\cdots =d_{n-2}=n-3$, and $d_{n-1}=d_n=n-1$, which implies
$G=K_2\vee (K_{n-4}+2K_1)$. If $2n=3k+7$, then $n< 14$ since $k< n/2$, and hence $n=11$, $k=5$ or $n=8$, $k=3$. The corresponding permissible graphic sequences are $(5,5,5,5,5,5,10,10,10,10,10)$ and $(3,3,3,4,4,7,7,7)$, which implies $G=K_5\vee 6K_1$ or $G=K_3\vee (3K_1+K_2)$.

Now assume $k\geq 3$ and $2n-3k-7<0$. In this case, $n\geq 2k+1\geq 7$.

If $n\geq 11$, then $2n-3k-7\geq 2n-\frac{3(n-1)}{2}-7=\frac{n-11}{2}\geq 0$. If $n=10$, then $k\leq 4$ and $2n-3k-7\geq 1$. If $n=8$, then $k\leq 3$, and hence $2n-3k-7\geq 0$. In each case, we get a contradiction.

If $n=9$, then $k\leq 4$. If $k\leq 3$, then $2n-3k-7\geq 2$, a contradiction. Now assume that $k=4$. Then $d_5\leq 4$ and $25\leq m\leq 26$. From the inequality $d_6+d_7+d_8+d_9=2m-\sum_{i=1}^{5}d_i\geq 30$, we obtain $d_8=d_9=8$ and $d_6+d_7\geq 14$. Also note that $\sum d_i=2m\geq 50$ and $\sum d_i$ is even. If $d_6=d_7=8$, then the permissible graphic sequence is $(4,4,4,4,4,8,8,8,8)$, and $G=K_4\vee 5K_1$. If $d_6=7$ and $d_7=8$, then the permissible graphic sequence is $(3,4,4,4,4,7,8,8,8)$ and $G=K_3\vee (K_{1,4}+K_1)$. If $d_6=6$ and $d_7=8$, then the permissible graphic sequence is $(4,4,4,4,4,6,8,8,8)$, hence $G=K_3\vee (K_2+K_{1,3})$ and it contains a Hamilton cycle. If $d_6=d_7=7$, then the permissible graphic sequence is $(4,4,4,4,4,7,7,8,8)$. If $v_6$ is adjacent to $v_7$, then $G$ is constructed as follows. Let $X=K_4$ and $Y=4K_1$, $x\in X$, $y_1,y_2\in Y$. $G$ is obtained from $X\vee Y$ by deleting $xy_1,xy_2$ and adding a new edge $y_1y_2$. Note that $G$ contains a Hamilton cycle. If $v_6$ is not adjacent to $v_7$, then $G=K_2\vee K_{2,5}$.

If $n=7$, then $k\leq 3$. If $k\leq 2$, then $2n-3k-7\geq 1$, a contradiction. If $k=3$, then $d_4\leq 3$ and $14\leq m\leq 15$. Since $d_5+d_6+d_7=2m-\sum_{i=1}^{4}d_i\geq 28-12=16$, we get $d_7=6$ and $d_5+d_6\geq 10$. Also note that $\sum d_i=2m\geq 28$ and $\sum d_i$ is even. If $d_5=d_6=6$, then the permissible graphic sequence is $(3,3,3,3,6,6,6)$ and $G=K_3\vee 4K_1$. If $d_5=5$ and $d_6=6$, then the permissible graphic sequence is $(2,3,3,3,5,6,6)$, and $G=K_2\vee (K_{1,3}+K_1)$. If $d_5=d_6=5$, then the permissible graphic sequence is $(3,3,3,3,5,5,6)$. If $v_5$ is not adjacent to $v_6$, then $G=K_1\vee K_{2,4}$. If $v_5$ is adjacent to $v_6$, then $G$ consists of three triangles and a quadrilateral (a cycle of length 4) sharing only one common edge, and in this case $G$ contains a Hamilton cycle.

It is clear that each graph in $\mathcal{G}_2$ contains no Hamilton cycle. The proof is complete.
\end{proof}

The following direct but interesting and useful observation is an exercise in \cite{Bondy_Murty} (see Ex 18.1.6 \cite{Bondy_Murty}).
\begin{lemma}\label{leNG2}
Let $G$ be a graph. Then $G$ contains a Hamilton path if and
only if $G\vee K_1$ contains a Hamilton cycle.
\end{lemma}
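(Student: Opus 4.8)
The plan is to verify the two implications separately, in each case explicitly converting one spanning structure into the other. Throughout, write $n=|V(G)|$ and let $u$ denote the single vertex of the $K_1$ factor, so that by definition of the join $u$ is adjacent in $G\vee K_1$ to every vertex of $G$, and the edges of $G\vee K_1$ not incident with $u$ are precisely the edges of $G$. This last observation is the only structural fact the whole argument rests on.

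For the forward direction, suppose $G$ has a Hamilton path $P=v_1v_2\cdots v_n$. Since $u$ is adjacent to both endpoints $v_1$ and $v_n$, I would simply close $P$ through $u$: the cyclic sequence $u\,v_1\,v_2\cdots v_n\,u$ uses the edge $uv_1$, the edges of $P$, and the edge $v_nu$, all of which are present in $G\vee K_1$. It visits each vertex of $G$ together with $u$ exactly once, and therefore is a Hamilton cycle of $G\vee K_1$.

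For the backward direction, suppose $G\vee K_1$ has a Hamilton cycle $C$. The vertex $u$ lies on $C$ and has exactly two neighbours along $C$, say $v_1$ and $v_n$. Deleting $u$ from $C$ breaks the cycle into a single path $Q$ from $v_1$ to $v_n$ that passes through all the remaining vertices, which are exactly the vertices of $G$. Each edge of $Q$ is an edge of $C$ other than $uv_1$ and $uv_n$, hence an edge of $G\vee K_1$ not incident with $u$, and so belongs to $E(G)$. Thus $Q$ is a Hamilton path of $G$.

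There is essentially no serious obstacle here: the statement is a direct construction in both directions, and the only point requiring care is the observation---immediate from the definition of the join---that an edge of $G\vee K_1$ lies in $G$ exactly when it avoids $u$. This is precisely what guarantees that deleting $u$ from a Hamilton cycle produces a path whose edges all live in $G$, and conversely that the path-closing edges $uv_1,uv_n$ are legitimately present. Hence the two notions coincide.
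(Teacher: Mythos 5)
Your proof is correct: the paper itself gives no proof of this lemma, deferring to Exercise 18.1.6 of Bondy and Murty, and your two-way construction (closing a Hamilton path of $G$ through the join vertex $u$, and conversely deleting $u$ from a Hamilton cycle of $G\vee K_1$) is precisely the standard argument intended there. The only caveat, immaterial in this paper since $n\geq 4$ throughout, is the degenerate case $|V(G)|=1$, where $G$ trivially has a Hamilton path but $G\vee K_1=K_2$ has no Hamilton cycle.
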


By using Lemmas \ref{leNG1} and \ref{leNG2}, we deduce the following Lemma \ref{leNG3}.

\begin{lemma}\label{leNG3}
Let $G$ be a graph on $n\geq 4$ vertices and $m$ edges with $\delta \geq 1$.
If $m\geq \binom{n-2}{2}+2$, then $G$ contains a Hamilton path
unless $G\in \mathcal{G}_1=\{K_1\vee (K_{n-3}+2K_1), K_1\vee (K_{1,3}+K_1), K_{2, 4}, K_2\vee 4K_1, K_2\vee (3K_1+K_2), K_1 \vee K_{2,5}, K_3\vee 5K_1, K_2\vee (K_{1, 4}+K_1), K_4\vee 6K_1\}$.
\end{lemma}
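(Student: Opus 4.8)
The plan is to reduce the Hamilton-path problem for $G$ to a Hamilton-cycle problem for the one-vertex cone $H:=G\vee K_1$, and then quote Lemma \ref{leNG1}. By Lemma \ref{leNG2}, $G$ has a Hamilton path if and only if $H$ has a Hamilton cycle, so it suffices to decide exactly when $H$ is Hamiltonian. First I would record the parameters of $H$: writing $v_0$ for the apex of the join, $H$ has $n':=n+1\ge 5$ vertices and $m':=m+n$ edges, and $\delta(H)\ge 2$, since $v_0$ has degree $n\ge 4$ while every original vertex $v$ satisfies $d_H(v)=d_G(v)+1\ge\delta+1\ge 2$. Thus $H$ meets the order and minimum-degree hypotheses of Lemma \ref{leNG1}.

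The one computation that makes the reduction work is the binomial identity, using $\binom{n-1}{2}-\binom{n-2}{2}=n-2$:
\[
\binom{n'-2}{2}+4=\binom{n-1}{2}+4=\Bigl(\binom{n-2}{2}+(n-2)\Bigr)+4=\Bigl(\binom{n-2}{2}+2\Bigr)+n.
\]
Hence the hypothesis $m\ge\binom{n-2}{2}+2$ on $G$ is \emph{exactly} equivalent to $m'=m+n\ge\binom{n'-2}{2}+4$, the edge hypothesis of Lemma \ref{leNG1} for $H$. Applying that lemma, $H$ contains a Hamilton cycle unless $H\in\mathcal{G}_2$, and combining with Lemma \ref{leNG2} I obtain that $G$ has a Hamilton path unless $G\vee K_1\in\mathcal{G}_2$.

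The remaining, and only delicate, step is to translate the exceptional family, i.e.\ to prove $G\vee K_1\in\mathcal{G}_2$ if and only if $G\in\mathcal{G}_1$. The guiding observation is that $G\vee K_1$ always has a universal vertex, while conversely every member of $\mathcal{G}_2$ is a join $K_a\vee B$ with $a\ge 1$, hence has a universal vertex and can be written as $(K_{a-1}\vee B)\vee K_1$. So for each $H\in\mathcal{G}_2$ I would find all $G$ with $G\vee K_1\cong H$, namely $G\cong H-v$ over universal vertices $v$ of $H$. Going through the nine members one by one and deleting an apex vertex gives the matching: $K_2\vee(K_{n'-4}+2K_1)$ with $n'=n+1$ yields $K_1\vee(K_{n-3}+2K_1)=G^{(1)}_n$; $K_3\vee 4K_1\mapsto K_2\vee 4K_1$; $K_2\vee(K_{1,3}+K_1)\mapsto K_1\vee(K_{1,3}+K_1)$; $K_1\vee K_{2,4}\mapsto K_{2,4}$; $K_3\vee(K_2+3K_1)\mapsto K_2\vee(3K_1+K_2)$; $K_4\vee 5K_1\mapsto K_3\vee 5K_1$; $K_3\vee(K_{1,4}+K_1)\mapsto K_2\vee(K_{1,4}+K_1)$; $K_2\vee K_{2,5}\mapsto K_1\vee K_{2,5}$; and $K_5\vee 6K_1\mapsto K_4\vee 6K_1$. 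These are precisely the nine graphs of $\mathcal{G}_1$.

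To be sure the list is neither too large nor too small, I would also verify that in each $H=K_a\vee B$ the \emph{only} universal vertices lie in the $K_a$ factor: in every case $B$ has no universal vertex (it is either disconnected, or a complete bipartite graph $K_{s,t}$ whose maximum degree falls short of $|V(H)|-1$), so $H-v\cong K_{a-1}\vee B$ is well defined up to isomorphism and no spurious $G$ appears. Note the slight but essential relabeling here: the "$n$" appearing in the description of $\mathcal{G}_2$ is the order of the graph tested by Lemma \ref{leNG1}, which for $H$ is $n'=n+1$. This bookkeeping — matching all nine graphs and ruling out extra universal vertices — is the main obstacle, but it is entirely mechanical once the reduction is in place. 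Finally, each $G\in\mathcal{G}_1$ arises as $H-v_0$ from a non-Hamiltonian $H\in\mathcal{G}_2$, so by Lemma \ref{leNG2} none of them admits a Hamilton path, confirming that the exceptional list is exact.
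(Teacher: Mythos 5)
Your proof is correct and follows essentially the same route as the paper: form $G\vee K_1$, check that it meets the hypotheses of Lemma \ref{leNG1}, apply Lemma \ref{leNG2} to transfer Hamilton cycles back to Hamilton paths, and identify $\mathcal{G}_1$ as the family obtained by deleting the apex from each member of $\mathcal{G}_2$. The only difference is one of care, not of method: the paper simply asserts $\mathcal{G}_2=\{K_1\vee G: G\in\mathcal{G}_1\}$, whereas you justify this identification by locating the universal vertices of each member of $\mathcal{G}_2$ and verifying that no spurious preimages arise.
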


\begin{proof}
Now assume $G$ with $m\geq \binom{n-2}{2}+2$ contains a Hamilton path. Let $G'=G\vee K_1$. Then $|V(G')|=n+1$ and $|E(G')|\geq\binom{n-2}{2}+2+n=\binom{n-1}{2}+4$. Since $n\geq 4$, the order of $G'$ is at least $5$. By Lemma \ref{leNG1}, $G'$ contains a Hamilton cycle unless $G'\in \mathcal{G}_2=\{K_2\vee (K_{n-4}+2K_1), K_3\vee 4K_1, K_2\vee (K_{1, 3}+K_1), K_1\vee K_{2,4}, K_3\vee (K_2+3K_1), K_4\vee 5K_1, K_3\vee (K_{1,4}+K_1), K_2\vee K_{2,5}, K_5\vee 6K_1\}$. If $G'$ contains a Hamilton cycle, then by Lemma \ref{leNG2}, $G$ contains a Hamilton path. Otherwise, $G'$ contains no Hamilton cycle and $G'\in \mathcal{G}_2$. By Lemma \ref{leNG2}, $G$ contains no Hamilton path and $G\in \mathcal{G}_1$, where $\mathcal{G}_2=K_1 \vee \mathcal{G}_1\doteq\{K_1\vee G: G \in \mathcal{G}_1\}$.

The proof is complete.
\end{proof}

\begin{remark}
Recently, a similar result for the existence of Hamilton paths in connected graphs was given by Lu et al. \cite{Lu_Liu_Tian}. However, it seems to have some flaws in their original lemma (Lemma 3.2 in \cite{Lu_Liu_Tian}) and its proof (see \cite[Line 16, Page 1673]{Lu_Liu_Tian}). Compared with Lu et al.'s result, we weaken the condition and add six more exceptional graphs.
\end{remark}

Hong et al. \cite{Hong_Shu_Kang} proved the following spectral inequality for connected graphs. Nikiforov \cite{Nikiforov1} proved it for general graphs independently, and the case of equality was characterized in \cite{Zhou_Cho}.

\begin{lemma}[Nikiforov \cite{Nikiforov1}]\label{leNi}
Let $G$ be a graph on $n$ vertices and $m$ edges and let $\delta$ be the minimum degree of $G$. Then $\rho(G)\leq\frac{\delta-1}{2}+\sqrt{2m-n\delta+\frac{(\delta+1)^2}{4}}$.
\end{lemma}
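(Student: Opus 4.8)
The plan is to reduce the stated bound to a clean quadratic inequality in $\rho$ and then to prove that inequality with the Perron eigenvector. First I would note that, since $\rho\ge \frac{2m}{n}\ge\delta\ge\frac{\delta-1}{2}$, the quantity $\rho-\frac{\delta-1}{2}$ is nonnegative, so the asserted inequality is equivalent to its square
\[
\Big(\rho-\tfrac{\delta-1}{2}\Big)^2\le 2m-n\delta+\tfrac{(\delta+1)^2}{4}.
\]
Expanding the left side and using $\frac{(\delta+1)^2-(\delta-1)^2}{4}=\delta$, this is in turn equivalent to
\[
\rho^2-(\delta-1)\rho\le 2m-(n-1)\delta .
\]
Thus the whole problem collapses to establishing this single quadratic estimate; solving $t^2-(\delta-1)t-\big(2m-(n-1)\delta\big)=0$ and taking the larger root then recovers the closed form in the statement (one checks directly that the discriminant reproduces $\frac{(\delta+1)^2}{4}+2m-n\delta$).

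For the quadratic estimate I would take a nonnegative unit eigenvector $\mathbf{x}=(x_1,\dots,x_n)^{\mathsf T}$ of $A$ for $\rho$ (Perron--Frobenius), so that $\rho x_i=\sum_{j\sim i}x_j$ for every $i$ and $\sum_i x_i^2=1$. Writing $\rho^2=\sum_i\big(\sum_{j\sim i}x_j\big)^2$ and $\rho=\sum_i x_i\sum_{j\sim i}x_j$, the idea is to estimate the length-two walk sum by Cauchy--Schwarz at each vertex, but to split off a part of size $\delta-1$ so that the minimum-degree hypothesis $d_i\ge\delta$ is actually used. The bookkeeping should then produce a self-referential inequality $\rho^2\le (\delta-1)\rho+\big(2m-(n-1)\delta\big)$, exactly the quadratic estimate above: the term $2m=\sum_i d_i$ enters through summing the degrees, while the $(n-1)\delta$ correction comes from discarding a contribution of size $\delta$ at each vertex.

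The main obstacle is arranging the Cauchy--Schwarz split so that it yields precisely the linear term $(\delta-1)\rho$, rather than the cruder consequence $\rho^2\le\max_i\sum_{j\sim i}d_j$; it is this finer constant that makes the inequality tight not only on regular graphs (where $\rho=\delta$ and $2m=n\delta$ force equality) but also on non-regular examples such as the complete split graph $K_\delta\vee\overline{K_{n-\delta}}$ (for $\delta=2$ this is $K_4-e$, which one verifies attains equality). Consequently the second, and harder, half of the work is the equality discussion: one must trace back through every step—each per-vertex Cauchy--Schwarz and each use of $d_i\ge\delta$—to determine the forced structure. I would expect to recover that equality holds exactly for regular graphs together with the bidegreed graphs all of whose vertices have degree $\delta$ or $n-1$, which is the characterization obtained in \cite{Zhou_Cho}; carrying that case analysis out in full is where the real effort lies.
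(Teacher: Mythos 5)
The paper never proves this statement: Lemma~\ref{leNi} is quoted as a known result of Hong--Shu--Kang and Nikiforov (with the equality case characterized in \cite{Zhou_Cho}), so your proposal has to stand entirely on its own --- and it does not. The reduction in your first paragraph is correct (squaring is legitimate since $\rho\ge 2m/n\ge\delta$, and the discriminant computation checks out), but that reduction is the trivial part. The entire content of the lemma is the quadratic estimate $\rho^2-(\delta-1)\rho\le 2m-(n-1)\delta$, and you never establish it: you say the ``bookkeeping should'' produce it, and you yourself name the arrangement of the Cauchy--Schwarz split as ``the main obstacle,'' which you then leave unresolved. A proof plan that defers exactly the step carrying all the difficulty is a gap, not a proof.

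Worse, the mechanism you sketch provably runs the wrong way, so no bookkeeping can rescue it as described. Per-vertex Cauchy--Schwarz gives $\rho^2=\sum_u\bigl(\sum_{v\sim u}x_v\bigr)^2\le\sum_u d_u\sum_{v\sim u}x_v^2=\sum_v x_v^2\sum_{u\sim v}d_u$, and the minimum-degree count $\sum_{u\sim v}d_u\le 2m-d_v-(n-1-d_v)\delta$ then yields $\rho^2\le 2m-(n-1)\delta+(\delta-1)\sum_v d_vx_v^2$. But $\sum_v d_vx_v^2=\sum_{uv\in E(G)}(x_u^2+x_v^2)\ge 2\sum_{uv\in E(G)}x_ux_v=\rho$, so the correction term dominates $(\delta-1)\rho$ from \emph{above}, and the desired inequality does not follow. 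The max-entry variant has the same sign problem: if $x_u=\max_i x_i$, then $\rho x_u=\sum_{v\sim u}x_v\le d_ux_u$ forces $d_u\ge\rho$, so the bound $\rho^2\le 2m-(n-1)\delta+(\delta-1)d_u$ cannot be converted into one with $(\delta-1)\rho$. Overcoming precisely this point is the substance of the theorem, and the arguments in \cite{Hong_Shu_Kang,Nikiforov1} are genuinely more delicate than a per-vertex Cauchy--Schwarz with a split-off term. Finally, the equality characterization you propose to work out is not part of the statement being proved, so that effort would be superfluous even if the core step were repaired.
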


The following result is also useful for us.

\begin{lemma}[Hong, Shu and Kang \cite{Hong_Shu_Kang}, Nikiforov \cite{Nikiforov1}]\label{leHSK}
For nonnegative integers $p$ and $q$ with $2q\leq p(p-1)$ and $0\leq x\leq p-1$, the function $f(x)=(x-1)/2+\sqrt{2q-px+(1+x)^2/4}$ is decreasing with respect to $x$.
\end{lemma}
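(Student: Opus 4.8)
The plan is to treat $f$ as a differentiable function of the real variable $x$ on $[0,p-1]$ and to show that $f'(x)\le 0$ throughout, with the sign condition collapsing exactly to the hypothesis $2q\le p(p-1)$. Write $g(x)=2q-px+\tfrac{(1+x)^2}{4}$ for the radicand, so that $f(x)=\tfrac{x-1}{2}+\sqrt{g(x)}$; here I assume $g(x)\ge 0$ on the interval so that $f$ is real-valued (this is implicit in the statement and holds in every application). Since $g'(x)=\tfrac{1+x}{2}-p$, differentiating gives
\begin{equation*}
f'(x)=\frac12+\frac{g'(x)}{2\sqrt{g(x)}}=\frac12+\frac{\tfrac{1+x}{2}-p}{2\sqrt{g(x)}}.
\end{equation*}

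First I would observe that $f'(x)\le 0$ is equivalent to $\sqrt{g(x)}\le p-\tfrac{1+x}{2}$. Before squaring I must check that the right-hand side is nonnegative: on $[0,p-1]$ the map $x\mapsto p-\tfrac{1+x}{2}$ is decreasing with minimum value $\tfrac{p}{2}\ge 0$ attained at $x=p-1$ (the case $p=0$ being vacuous, since the interval is then empty). With both sides nonnegative, squaring is an equivalence, and
\begin{equation*}
g(x)\le\Big(p-\tfrac{1+x}{2}\Big)^2
\iff 2q-px+\tfrac{(1+x)^2}{4}\le p^2-p(1+x)+\tfrac{(1+x)^2}{4}.
\end{equation*}
The crucial point is that the terms $\tfrac{(1+x)^2}{4}$ and the linear terms $-px$ cancel on both sides, so this reduces to the $x$-free inequality $2q\le p^2-p=p(p-1)$, which is precisely the hypothesis. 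Hence $f'(x)\le 0$ at every point where $f$ is differentiable, and $f$ is nonincreasing.

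The only genuine obstacle is bookkeeping at the boundary of differentiability. The computation above is valid wherever $g(x)>0$; if $g$ vanishes at an interior point then $\sqrt{g}$ fails to be differentiable there, but $g$ is a quadratic and so has at most two zeros, $f$ remains continuous, and monotonicity on each subinterval where $g>0$ together with continuity still forces $f$ to be nonincreasing on all of $[0,p-1]$. I expect this continuity patch to be the subtlest thing to phrase cleanly, whereas the algebraic heart of the argument—the exact cancellation that turns the derivative condition into $2q\le p(p-1)$—is short and transparent.
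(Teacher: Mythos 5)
Your proof is correct, but there is nothing in the paper to compare it against: the paper states this lemma without proof, quoting it from the cited references (Hong--Shu--Kang and Nikiforov), where the monotonicity is obtained by essentially the differentiation argument you give. Your chain of equivalences is sound: on $[0,p-1]$ one has $p-\frac{1+x}{2}\ge \frac{p}{2}\ge 0$, so squaring $\sqrt{g(x)}\le p-\frac{1+x}{2}$ is a genuine equivalence, and the cancellation of both the quadratic terms $\frac{(1+x)^2}{4}$ and the linear terms $-px$ collapses the condition $f'(x)\le 0$ exactly to the hypothesis $2q\le p(p-1)$; your continuity patch at the (finitely many) zeros of $g$ is the right way to handle the points where $\sqrt{g}$ is not differentiable, and your standing assumption that $g\ge 0$ on the interval is indeed implicit in the statement. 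One refinement worth recording: your computation shows $f'(x)<0$ whenever $2q<p(p-1)$, whereas in the boundary case $2q=p(p-1)$ the radicand equals $\bigl(p-\frac{1+x}{2}\bigr)^2$ identically, so that $f\equiv p-1$ is constant on $[0,p-1]$. Hence ``decreasing'' in the lemma must be read in the weak sense of non-increasing; this weak form is exactly what the paper uses when it applies the lemma with $x=\delta\ge 2$ to pass from Lemma \ref{leNi} to inequality (\ref{foineq3}).
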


The last lemma we need is a famous result on extremal graph theory due to Erd\"{o}s and Gallai. It has many generalizations and extensions. We refer the reader to, for example, \cite{Fan_Lv_Wang,Fujita_Lesniak,Woodall1,Woodall2}.

\begin{lemma}[Erd\"{o}s and Gallai \cite{Erdos_Gallai0}]\label{leEG}
Let $G$ be a graph on $n$ vertices and $m$ edges. For a given integer $k$, if $m>\frac{k(n-1)}{2}$, then $G$ contains a cycle of length at least $k+1$.
\end{lemma}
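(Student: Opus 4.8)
The plan is to prove the contrapositive: any graph $G$ on $n$ vertices whose circumference $c(G)$ (the length of a longest cycle) is at most $k$ satisfies $m\leq\frac{k(n-1)}{2}$; the lemma then follows immediately. I assume throughout that $k\geq 2$, which is the only interesting range — a spanning tree shows the claim fails for $k=1$, while for $k\geq n$ the hypothesis $m>\frac{k(n-1)}{2}\geq\binom{n}{2}$ cannot hold and the statement is vacuous. The argument will be a strong induction on $n$.

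First I would handle the base range and the reducible cases, all of which are routine. When $n\leq k$ the bound holds at once, since $m\leq\binom{n}{2}=\frac{n(n-1)}{2}\leq\frac{k(n-1)}{2}$. For $n>k$, if $G$ is disconnected with components of orders $n_1,\ldots,n_t$, each component has circumference at most $k$, so the induction hypothesis gives $m\leq\sum_{i}\frac{k(n_i-1)}{2}=\frac{k(n-t)}{2}\leq\frac{k(n-1)}{2}$. If instead $G$ is connected with a cut vertex $v$, I would split $G$ along $v$ into two graphs $G_1,G_2$ of orders $n_1,n_2\geq 2$ with $n_1+n_2=n+1$; since $c(G_i)\leq c(G)\leq k$ and $n_i<n$, the induction hypothesis yields $m=e(G_1)+e(G_2)\leq\frac{k(n_1+n_2-2)}{2}=\frac{k(n-1)}{2}$.

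The crux is the remaining case, where $G$ is $2$-connected and $n>k\geq c(G)$, so that $c(G)<n$. Here I would invoke the classical long-cycle theorem of Dirac, that every $2$-connected graph contains a cycle of length at least $\min\{n,2\delta\}$. Since $c(G)<n$, this forces $c(G)\geq 2\delta$, whence $\delta\leq c(G)/2\leq k/2$. Deleting a vertex $v$ of minimum degree leaves a graph on $n-1$ vertices with circumference still at most $k$, so by induction $e(G-v)\leq\frac{k(n-2)}{2}$, and therefore $m=e(G-v)+\deg(v)\leq\frac{k(n-2)}{2}+\frac{k}{2}=\frac{k(n-1)}{2}$, closing the induction.

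I expect the only genuine obstacle to be this $2$-connected step, and concretely the inequality $\delta\leq c(G)/2$. The cleanest route is to quote Dirac's theorem as above; if a self-contained proof is wanted, the same bound can be extracted directly from a longest-cycle analysis: taking a longest cycle $C$ and a vertex $v\notin C$, the two internally disjoint $v$--$C$ paths supplied by $2$-connectivity can be rerouted through $C$ to build a strictly longer cycle unless their endpoints on $C$ are forced close together, and tracking this constraint bounds the degrees. Carrying out that rotation argument with care — controlling the attachment points and avoiding double-counting chords — is the delicate part; by contrast the connectivity reductions and the base case are pure bookkeeping.
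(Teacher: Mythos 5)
The paper never proves this lemma: it is quoted as a classical result with a citation to \cite{Erdos_Gallai0}, so there is no in-paper argument to measure yours against, and your proposal must stand on its own. It does. Your proof is the standard modern derivation and it is correct: strong induction on $n$ with base case $n\leq k$; reduction over components when $G$ is disconnected (giving $m\leq\frac{k(n-t)}{2}$); splitting at a cut vertex $v$ into $G_1,G_2$ with $n_1+n_2=n+1$ and $E(G)=E(G_1)\cup E(G_2)$ disjointly, so $m\leq\frac{k(n_1+n_2-2)}{2}=\frac{k(n-1)}{2}$; and, in the $2$-connected case, Dirac's theorem that a $2$-connected graph has circumference at least $\min\{n,2\delta\}$, which together with $c(G)<n$ forces $\delta\leq c(G)/2\leq k/2$, so deleting a minimum-degree vertex gives $m\leq\frac{k(n-2)}{2}+\frac{k}{2}=\frac{k(n-1)}{2}$. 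The connectivity trichotomy is exhaustive because $n>k\geq 2$ ensures $n\geq 3$, and quoting Dirac's theorem involves no circularity (it is older than, and independent of, the Erd\H{o}s--Gallai theorem; its proof is the rotation argument you sketch, though your sketch alone would not suffice as a replacement). One genuinely useful observation you make: the lemma as stated in the paper, ``for a given integer $k$,'' is false for $k\leq 1$ (a tree has $n-1>\frac{n-1}{2}$ edges and no cycle), so the implicit hypothesis $k\geq 2$ that you isolate is actually needed; this does not affect the paper's application, where the lemma is invoked with $k=n-4$ and $n\geq 14$.
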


\subsection{Computation of spectral radii}
By a result of Lu et al. \cite{Lu_Liu_Tian}, the spectral radius $\rho_n^{(1)}$ of $G_n^{(1)}$ is the largest zero of the equation $x^3-(n-4)x^2-(n-1)x+2(n-4)=0$. By some routine calculation, we obtain that the spectral radius $\rho_n^{(2)}$ of $G^{(2)}_n$ is the largest zero of the equation $x^3-(n-4)x^2-(n+1)x+4(n-5)=0$. Let $G_{n,k}=K_{k}\vee (n-k)K_1$. From \cite{Nikiforov3}, we know $\rho(G_{n,k})=\big(k-1+\sqrt{4kn-(3k-1)(k+1)}\big)/2$.
The numerical results of the spectral radii of the exceptional
graphs in Lemmas \ref{leNG1} and \ref{leNG3} are shown in Tables 1 and 2, respectively.

\section{Proofs}
Before the proofs, we give some additional terminology and notation. Let $G$ be a graph, $H$ a subgraph of $G$ and $S\subset V(G)$. For a vertex $v\in V(G)$, let $N_{H}(v)$ be the set of neighbors of $v$ in $H$ and $d_{H}(v)=|N_{H}(v)|$. Moreover, let $N_{H}(S)=\cup_{v\in S} N_{H}(v)$ and $d_{H}(S)=|N_{H}(S)|$. Denote by $G[S]$ the subgraph of $G$ induced by $S$. For $G-S$, we mean the subgraph induced by $V(G)\backslash S$. Let $R$ be a cycle or path of $G$ with a given direction. For a vertex $v\in V(R)$, $v^+$ and $v^-$ are always referred to as the successor and predecessor of $v$ along the direction of $C$, respectively. For two vertices $u,v\in V(R)$, we use $\overrightarrow{R}[u,v]$ to denote the segment from $u$ to $v$ of $R$ along the direction, and $\overleftarrow{R}[u,v]$ denotes the one in the opposite direction.

\noindent{}
{\bf Proof of Theorem \ref{tNG1}.}
Suppose that $G$ has no Hamilton path. By the condition, we have
\begin{align}\label{foineq1}
\rho(G)>n-3.
\end{align}
Since $\delta\geq 1$, by a theorem due to Hong \cite{Hong}, we have
\begin{align}\label{foinHo}
\rho(G)\leq\sqrt{2m-n+1}.
\end{align}
Combining inequalities (\ref{foineq1}) with (\ref{foinHo}), we obtain $2m>n^2-5n+8$. Furthermore,
by parity, $m\geq \binom{n-2}{2}+2$. Since $G$ contains no Hamilton path,
$G\in \mathcal{G}_1=\{G^{(1)}_n, K_1\vee (K_{1,3}+K_1), K_{2, 4}, K_2\vee 4K_1,
K_2\vee (3K_1+K_2),K_1 \vee K_{2,5},K_3\vee 5K_1,K_2\vee (K_{1, 4}+K_1),K_4\vee 6K_1\}$
by Lemma \ref{leNG3}. However, by the numerical results in Table 2 and the condition $\rho(G)>n-3$,
$G$ is a graph in $\{K_1\vee (K_{n-3}+2K_1),K_2\vee 4K_1,K_1\vee (K_{1,3}+K_1)\}$. (Note that $K_{n-2}$ is a proper subgraph of
$K_1\vee (K_{n-3}+2K_1)$. By the Perron-Frobenius theorem, $\rho(K_1\vee (K_{n-3}+2K_1))>\rho(K_{n-2})=n-3$.)
{\hfill$\Box$}

\noindent{}
{\bf Proof of Corollary 1.}
Suppose that $G$ contains no Hamilton path. If $\delta(G)\geq 1$, then by the fact $\rho(G)\geq n-2$, we have $\rho(G)>n-3$. By Theorem 1, $G\in \{G^{(1)}_n,K_2\vee 4K_1,K_1\vee (K_{1,3}+K_1)\}$. However, if $G=K_2\vee 4K_1$, then $\rho(G)\approx 3.3723<4$, a contradiction. If $G=K_1\vee (K_{1,3}+K_1)$, then $\rho(G)\approx 3.1020<4$, a contradiction. If $G=G^{(1)}_n$, then we obtain $\rho(G)<n-2$ by considering the fact that the spectral radius of $G_n^{(1)}$ is the largest zero of the equation $x^3-(n-4)x^2-(n-1)x+2(n-4)=0$ (see \cite{Lu_Liu_Tian}). Hence we get a contradiction. Now we obtain $\delta(G)=0$. If $G\neq K_{n-1}+K_1$, then by the Perron-Frobenius theorem, $\rho(G)<\rho(K_{n-1}+K_1)=n-2$, contradicting the condition that
$\rho(G)\geq n-2$. Thus $G=K_{n-1}+K_1$.
{\hfill$\Box$}

\noindent{}
{\bf Proof of Corollary 2.}
Suppose $G$ contains no Hamilton path. Since $G$ is connected, $\delta\geq 1$. Meantime, $\rho(G)\geq \sqrt{(n-3)^2+2}>n-3$. By Theorem 1, $G\in \{G^{(1)}_n,K_2\vee 4K_1,K_1\vee (K_{1,3}+K_1)\}$. Since the order $n\geq 7$, $G=G^{(1)}_n$. Since the spectral radius of $G_n^{(1)}$ is the largest zero of the equation $x^3-(n-4)x^2-(n-1)x+2(n-4)=0$, it is easy to check $\rho(G^{(1)}_n)<\sqrt{(n-3)^2+2}$, a contradiction.
{\hfill$\Box$}

\noindent{}
{\bf Proof of Theorem \ref{tNG2}.}
$(1)$ Suppose there exists a graph $G$ on
$n\geq 14$ vertices satisfying $\rho(G)\geq \rho(G_n^{(2)})$, $G\neq G_n^{(2)}$
and $G$ contains no Hamilton cycle. Let $C$ be a longest cycle of $G$ of length $c$ with a given orientation. Let $R=G-V(C)$ and $V(R)=\{x_1,x_2,\ldots,x_s\}$. Without loss of generality, we assume that $d_{C}(x_1)=\max \{d_{C}(x_i)\}$. Suppose
$N_{C}(x_1)=\{y_1,y_2,\ldots,y_r\}$. Let $S=\{w_i: w_i=y_i^+, i=1,2,..,r\}$
and $H=G[V(C)\backslash S]$.

We first give the following claim, the second part of which can be proved by the technique called cycle-exchange in structural graph theory. (For the technique, see for example, \cite[Page 485]{Bondy_Murty}.) To make the context integrity, we give all the details of the proof here.
\setcounter{claim}{0}
\begin{claim}
(i) $S\cup \{x_1\}$ is an independent set.
(ii) For any two distinct vertices $w_l,w_{l'}\in S$, $d_{C}(w_l)+d_{C}(w_{l'})\leq c$.
(iii) Let $R'$ be a component of $R$. Let $u$ and $v$ be two vertices in $C$ which are neighbours of $R'$.
Then $d_{C}(u^+)+d_{C}(v^+)\leq c$.
\end{claim}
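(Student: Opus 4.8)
The unifying idea is that $C$ is a \emph{longest} cycle, so any of the three configurations, if it occurred, could be spliced together with $x_1$ (respectively with a path through the component $R'$) to produce a cycle of length $c+1$, a contradiction. Throughout, fix the orientation $c_0c_1\cdots c_{c-1}c_0$ of $C$ and recall that $w_i=y_i^+$, so $y_i$ is the predecessor of $w_i$ on $C$ and $x_1\sim y_i$.

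For (i) I would first rule out $x_1\sim w_i$: since also $x_1\sim y_i$ and $y_iw_i\in E(C)$, replacing the edge $y_iw_i$ by the path $y_ix_1w_i$ inserts $x_1$ into $C$ and produces a cycle of length $c+1$. I would then rule out $w_iw_j\in E$ for $i\ne j$: writing $y_i=c_a,\,w_i=c_{a+1},\,y_j=c_b,\,w_j=c_{b+1}$ with $a<b$, the closed sequence
\[
x_1\,c_b\,c_{b-1}\cdots c_{a+1}\,c_{b+1}\,c_{b+2}\cdots c_{c-1}\,c_0\cdots c_a\,x_1
\]
is a cycle (it uses $x_1y_j$, the reversed arc down to $c_{a+1}$, the chord $w_iw_j=c_{a+1}c_{b+1}$, the forward arc back to $c_a$, and $y_ix_1$) visiting $x_1$ and every vertex of $C$ exactly once, so it again has length $c+1$. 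As the $w_i$ are distinct, this shows $S\cup\{x_1\}$ is independent.

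Statement (ii) is the special case of (iii) in which $R'=\{x_1\}$, $u=y_l$, $v=y_{l'}$, because then $u^+=w_l$ and $v^+=w_{l'}$; so I would prove (iii) directly. Since $R'$ is connected and $u,v$ each have a neighbour in $R'$, there is a path $Q$ from $u$ to $v$ all of whose internal vertices lie in $R'$, hence off $C$. Write $N_C^{+}(u^+)=\{c^+:c\in N_C(u^+)\}$; since $c\mapsto c^+$ is a bijection of $V(C)$, $|N_C^{+}(u^+)|=d_C(u^+)$. The core is a \emph{crossing lemma}: if $c_k\in N_C(u^+)$ and $c_{k+1}\in N_C(v^+)$ with $c_k\ne u$ and $c_{k+1}\ne v$, then combining the two chords $u^+c_k$ and $v^+c_{k+1}$ with the path $Q$ (used in place of the edges $uu^+$ and $vv^+$) and suitably oriented arcs of $C$ reassembles $V(C)\cup(V(Q)\setminus\{u,v\})$ into a single cycle, which is longer than $C$ because $Q$ contributes at least one vertex off $C$. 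Granting this, if $d_C(u^+)+d_C(v^+)\ge c+1$ then $|N_C^{+}(u^+)|+|N_C(v^+)|>c=|V(C)|$, so $N_C^{+}(u^+)\cap N_C(v^+)\ne\emptyset$; after discarding the few forced intersection points adjacent to $u$ and $v$ one locates a genuine crossing $c_{k+1}$ (with $c_k\ne u$, $c_{k+1}\ne v$), and the crossing lemma then yields a cycle longer than $C$, a contradiction. Hence $d_C(u^+)+d_C(v^+)\le c$.

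The main obstacle is exactly this last passage, and two points require care. First, the reassembly in the crossing lemma must be verified to give one cycle rather than several disjoint ones, and this depends on the cyclic position of $c_k,c_{k+1}$ relative to $u,u^+,v,v^+$, so a short case analysis over these positions is needed. Second, the pigeonhole step only guarantees \emph{some} common vertex of $N_C^{+}(u^+)$ and $N_C(v^+)$, and the degenerate intersections coming from the forced edges $uu^+$ and $vv^+$ (for instance $v\in N_C^{+}(u^+)\cap N_C(v^+)$ whenever $u^+\sim v^-$) do \emph{not} by themselves create a longer cycle; I would handle these by a compensating count showing that each forced overlap is offset by a vertex missing from $N_C^{+}(u^+)\cup N_C(v^+)$, equivalently by treating separately the configurations in which $u$ and $v$ lie within distance three on $C$. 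This boundary bookkeeping is the delicate part of the cycle-exchange, and once it is settled parts (ii) and (iii) follow at once.
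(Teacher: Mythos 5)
Your part (i) is correct (and matches the paper's, which merely asserts that any edge inside $S\cup\{x_1\}$ yields a longer cycle), and reducing (ii) to (iii) via the path $u\,x_1\,v$ is legitimate. The genuine gap is in your ``crossing lemma'', which is false as stated, and the failure is not repairable by the case analysis and boundary bookkeeping you describe. The problem is that you shift neighbourhoods uniformly \emph{forward along $C$}: you claim that $c_k\in N_C(u^+)$ and $c_{k+1}\in N_C(v^+)$ (with $c_k\neq u$, $c_{k+1}\neq v$) always reassemble into a cycle longer than $C$. This is true when $c_k,c_{k+1}$ lie on the arc of $C$ from $v^+$ to $u$, but when they lie on the arc from $u^+$ to $v$ the two chords $u^+c_k$ and $v^+c_{k+1}$ are \emph{nested} rather than crossing, and no longer cycle need exist. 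Concretely, let $C=c_0c_1\cdots c_7c_0$, add the chords $c_1c_3$ and $c_4c_6$, and let $R'=\{x_1\}$ with $x_1$ adjacent only to $u=c_0$ and $v=c_5$. One checks there is no Hamilton cycle (it would require a Hamilton path from $c_5$ to $c_0$ in the graph minus $x_1$, and none exists), so $C$ is a longest cycle; yet $c_k=c_3\in N_C(u^+)=N_C(c_1)$ and $c_{k+1}=c_4\in N_C(v^+)=N_C(c_6)$, with $c_3\neq u$ and $c_4\neq v$. So the hypotheses of your lemma hold while its conclusion fails. This also undermines the pigeonhole step: the guaranteed common vertex of $N_C^{+}(u^+)$ and $N_C(v^+)$ may lie entirely in this nested region, far from $u$ and $v$, so discarding intersections within distance three of $u$ and $v$ and compensating counts cannot rescue the argument.

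The directional correction that fixes this is exactly what the paper's proof implements: the shift must go forward along $C$ on one of the two arcs determined by $u$ and $v$, and \emph{backward} on the other, and this happens automatically if one shifts along an auxiliary path instead of along $C$. The paper forms the path $P$ from $u^+$ to $v^+$ covering $V(C)\cup\mathrm{int}(Q)$ (for (ii): $P=\overrightarrow{C}[w_l,y_{l'}]\,y_{l'}x_1y_l\,\overleftarrow{C}[y_l,w_{l'}]$, where the middle piece is your path $Q$ traversed from $v$ to $u$), and applies the crossing argument with predecessors taken \emph{along $P$}: if $N_P^{-}(u^+)\cap N_P(v^+)\neq\emptyset$, one gets a cycle through all of $V(P)$, of length at least $c+1$, contradicting the maximality of $C$; hence $N_P(v^+)\subseteq V(P)\setminus\bigl(N_P^{-}(u^+)\cup\{v^+\}\bigr)$, and counting (using part (i) in case (ii) to ensure $d_P=d_C$ for the two vertices involved, with a one-vertex adjustment in case (iii)) gives $d_C(u^+)+d_C(v^+)\leq c$. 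Along $P$ the predecessor operation agrees with the $C$-successor on one arc and with the $C$-predecessor on the other; your uniform $C$-shift misses precisely this, which is why the nested configuration slips through.
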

\begin{proof}
(i) If there is at least one edge in $S\cup \{x_1\}$, there will be a cycle longer than $C$, a contradiction.
(ii) Assume that $w_l$, $w_{l'}$ are
in order along the orientation. Now we choose a path $P=\overrightarrow{C}[w_l,y_{l'}]y_{l'}x_1y_l\overleftarrow{C}[y_l,w_{l'}]$. Note
that $V(P)=V(C)\cup \{x_1\}$. Set $N^-_P(w_l)=\{x^-: x\in N_P(w_l)\}$. We claim that $N^-_P(w_l)\cap N_P(w_{l'})=\emptyset$, since otherwise there is
a cycle of length $c+1$, contradicting the choice of $C$. By Claim 1(i), $w_l, w_{l'}$ are not adjacent to $x_1$. This implies that $d_P(w_l)=d_C(w_l)$ and $d_P(w_{l'})=d_C(w_{l'})$. Furthermore, $N_P(w_{l'})\subset V(P)\backslash (N^-_P(w_l)\cup \{w_{l'}\})$. Thus $d_C(w_{l'})=d_P(w_{l'})\leq c+1-(d_P(w_l)+1)$,
and we have the required inequality.
(iii) The proof is almost the same as the proof of (ii).
\end{proof}

By the Perron-Frobenius theorem,
\begin{align}\label{foineq2}
\rho(G)\geq \rho(G^{(2)}_n)>\rho{(K_{n-2})}=n-3.
\end{align}
By Lemmas \ref{leNi}, \ref{leHSK} and and the fact $\delta\geq 2$, we have
\begin{align}\label{foineq3}
\rho(G)\leq\frac{1}{2}+\sqrt{2m-2n+\frac{9}{4}}.
\end{align}
Furthermore, with inequalities (\ref{foineq2}) and (\ref{foineq3}),
we obtain
\begin{align}\label{foineq4}
2m\geq n^2-5n+10.
\end{align}
By Lemma \ref{leEG}, $G$ contains a cycle of length at least
$\lceil\frac{2m}{n-1}\rceil\geq \lceil n-4+\frac{6}{n-1}\rceil\geq n-3$. Thus $n-3\leq c\leq n-1$.

\begin{case}
$c=n-1$.
\end{case}
In this case, $V(R)=\{x_1\}$ and $r=d_{C}(x_1)=d(x_1)\geq \delta \geq 2$.

Suppose that $r>2$. Since $G$ contains no Hamilton cycle and $\delta \geq 2$, there holds
\begin{align}\label{foineq5}
3\leq r\leq \left\lfloor \frac{n-1}{2} \right\rfloor.
\end{align}

Furthermore, we obtain $m=d_C(x_1)+\sum_{i=1}^{r}d_C(w_i)+e(H)$. By Claim 1, we have $d_C(w_i)+d_C(w_{i+1})\leq n-1$ for $i=1,2,\ldots,r$, where the subscripts are taken modulo $r$. By summing up all $r$ inequalities, we have $\sum_{i=1}^{r}d_C(w_i)\leq \frac{r(n-1)}{2}$. Then
\begin{align}\label{foineq6}
m=d_C(x_1)+\sum_{i=1}^{r}d_C(w_i)+e(H)\leq r+\frac{r(n-1)}{2}+\binom{n-r-1}{2}=\frac{n^2-(r+3)n+r^2+4r+2}{2}.
\end{align}
By (\ref{foineq4}) and (\ref{foineq6}), we obtain $(r-2)n\leq r^2+4r-8$. Then
\begin{align}\label{foineq7}
n\leq\frac{r^2+4r-8}{r-2}=r+6+\frac{4}{r-2}\doteq t(r).
\end{align}
If $r=3$, then $n\leq t(3)=13$. If $r=4$, then $n\leq t(4)=12$. If $r=5$,
then $n\leq t(5)=12+1/3$, and hence $n\leq 12$. If $r=6$, then $n\leq t(6)=13$.
If $r\geq 7$, then by (\ref{foineq5}) and (\ref{foineq7}), $n\leq r+6\leq\frac{n-1}{2}+6$, and this implies
$n\leq 11$. In each case, we can get a contradiction to the assumption $n\geq 14$.

Now suppose $r=2$. There holds $\frac{n^2-5n+10}{2}\leq m\leq\frac{n^2-5n+14}{2}$.
If $m=\frac{n^2-5n+14}{2}$, then by Lemma \ref{leNG1} and $n\geq 14$,
$G=K_2\vee (K_{n-4}+2K_1)$, a contradiction. Next we denote by $\mathcal{H}=\{H_{n,2}^i:i=1,2,\ldots,t\}$ the
class of graphs obtained when the equality in (\ref{foineq6}) holds. Obviously, each $H_{n,2}^i$ has the following structure feature: \\
($a$) $x_1$ has only two neighbors $y_1,y_2$ in $H_{n,2}^i$;\\
($b$) $\{w_1,w_2,x_1\}$ is an independent set and $w_1y_1,w_2y_2\in E(G)$;\\
($c$) $V(H)$ induces a clique of $(n-3)$ vertices.

If $m=\frac{n^2-5n+14}{2}-1$ or $m=\frac{n^2-5n+14}{2}-2$, then $G$ is obtained from one graph in $\mathcal{H}$ by deleting one edge or two edges
other than $E(C)\bigcup \{x_1y_1, x_1y_2\}$.

Without loss of generality, assume $d(w_1)\leq d(w_2)$. Since $\delta\geq 2$, $d(w_1)\geq 2$.

Assume that $w_1$ has one neighbor, say for $y$, in $G$ other than
$\{y_1,y_2\}$. Noting that $d(w_1)+d(w_2)\geq n-3\geq 11$, we have
$d(w_2)\geq 6$, and this implies that there exist two vertices
$z,z'\notin \{y_1,y_2,y\}$ such that $w_2z,w_2z'\in E(G)$. Note that
there are at most two edges missing in $H$. Hence there are at least
two edges in $\{y_2z,yz',y_2z',yz\}$. Assume that $y_2z\in E(G)$. Then
let $P$ be a $(z',y)$-path such that $V(P)=V(H)\backslash \{z,y_1,y_2\}$.
(Note that by ($c$), this path always exits.) Now $C=x_1y_1w_1\overleftarrow{P}[y,z']w_2zy_2x_1$
is a Hamilton cycle in $G$, a contradiction. The other cases can be proved similarly.

Now assume that $N_G(w_1)=\{y_1,y_2\}$. Then $G$ is a proper subgraph of $G^{(2)}_n$. By the Perron-Frobenius theorem, $\rho(G)<\rho(G^{(2)}_n)$, a contradiction.
\begin{case}
$c=n-2$.
\end{case}
In this case $V(R)=\{x_1,x_2\}$, and
\begin{align}\label{foineq8}
1\leq r\leq \left\lfloor \frac{n-2}{2} \right\rfloor.
\end{align}

If $r=1$, and $|N_{C}(R)|=1$, then $G=K_1 \vee (K_{n-3}+K_2)$. We will show $\rho (G^{(2)}_n)>\rho (K_1 \vee (K_{n-3}+K_2))$ in the Appendix.

If $r=1$, and $|N_{C}(R)|=2$, assume that $N_C(x_1)=\{u\}$ and $N_C(x_2)=\{v\}$. Then $u$ and $v$ are neighbours since $d(u)\geq 2$ and $d(v)\geq 2$. Then by Claim 1(iii), at least one of $d_C(u^+)$ and $d_C(v^+)$ is at most $c/2$, say, $d_C(u^+)$, i.e., $d_C(w_1)=d_C(u^+)\leq c/2=(n-2)/2$.

In both this case and the case $r\geq 2$, by a similar argument used in Case 1, we obtain
\begin{align*}
 m&= d_C(x_1)+d_C(x_2)+\sum_{i=1}^{r}d_{C}(w_i)+e(H)+e(R)\\
&\leq 2r+\frac{r(n-2)}{2}+\binom{n-r-2}{2}+1\\
&=\frac{n^2-(r+5)n+r^2+7r+8}{2}.
\end{align*}
Recall that $m\geq(n^2-5n+10)/2$, hence $n^2-5n+10\leq n^2-(r+5)n+(r^2+7r+8)$.
It follows that $rn\leq r^2+7r-2$. Thus $n\leq r+6\leq (n-2)/2+6$ by (\ref{foineq8}), and
this implies $n\leq 10$, a contradiction.

\begin{case}
$c=n-3$.
\end{case}
In this case $V(R)=\{x_1,x_2,x_3\}$, and
\begin{align}\label{foineq9}
1\leq r\leq \left\lfloor \frac{n-3}{2} \right\rfloor.
\end{align}

When $r=1$, we obtain
\begin{align*}
 m&= d_C(x_1)+d_C(x_2)+d_C(x_3)+d_{C}(w_1)+e(H)+e(R)\\
&\leq 3+(n-4)+\binom{n-4}{2}+3\\
&=\frac{n^2-7n+24}{2}.
\end{align*}
Recall that $m\geq(n^2-5n+10)/2$, hence $n^2-5n+10\leq n^2-7n+24$.
It follows that $n\leq 7$, a contradiction.

When $r\geq 2$, we obtain
\begin{align*}
 m&= d_C(x_1)+d_C(x_2)+d_C(x_3)+\sum_{i=1}^{r}d_{C}(w_i)+e(H)+e(R)\\
&\leq 3r+\frac{r(n-3)}{2}+\binom{n-r-3}{2}+3\\
&=\frac{n^2-(r+7)n+(r^2+10r+18)}{2}.
\end{align*}
Recall that $m\geq(n^2-5n+10)/2$, hence $n^2-5n+10\leq n^2-(r+7)n+(r^2+10r+18)$.
It follows that $(r+2)n\leq r^2+10r+8$, and hence $n\leq r+8-8/(r+2)$. Since $n$
is an integer, $n\leq r+7\leq (n-3)/2+7$ by (\ref{foineq9}), and this implies
$n\leq 11$, a contradiction.

If $G=G^{(2)}_n$, then $\rho(G)\geq \rho(G^{(2)}_n)$ and $G$ contains no Hamilton cycle. The proof of the part $(1)$ of Theorem \ref{tNG2} is complete.

$(2)$ Note that $K_5\vee 6K_1$ and $K_3\vee 4K_1$ contain no Hamilton cycles. As shown by Table 1, $\rho(K_5\vee 6K_1)>\rho(G^{(2)}_{11})$ and $\rho(K_4\vee 5K_1)>\rho(G^{(2)}_9)$. The proof is complete.
{\hfill$\Box$}

\begin{remark}
It is likely that the bound of the orders of graphs in Theorem \ref{tNG2} can be sharpened to $n\geq 10$. But this may need much more complicated structural analysis of graphs or other stronger tools from spectral graph theory.
\end{remark}

\bigskip

\noindent{\bf\Large Appendix A. Comparison of $\rho(K_2\vee (K_{n-4}+2K_1))$ and $\rho(K_1 \vee (K_{n-3}+K_2))$}

\medskip

\noindent The spectral radius of $G^{(2)}_n=K_2\vee (K_{n-4}+2K_1)$ is the largest zero of the polynomial
$f_n(x)=x^3-(n-4)x^2-(n+1)x+4(n-5)$. The characteristic polynomial of the adjacency matrix
of $K_1 \vee (K_{n-3}+K_2)$ is $(x+1)^{n-3}\big(x^3-(n-3)x^2-3x+3n-11\big)$, hence
the spectral radius of $K_1 \vee (K_{n-3}+K_2)$ is the largest zero of the polynomial
$g_n(x)=x^3-(n-3)x^2-3x+3n-11$.

It is easy to see that for each $f_n(x)$, $n\geq 4$ and each $g_n(x)$, $n\geq3$, there is exactly one zero greater than $n-3$.
$$f_n(n-3+\epsilon)=\epsilon^3+(2n-5)\epsilon^2+(n^2-5n+2)\epsilon-8.$$
By some elementary calculus, when $n\geq 4$, we find $f_n(n-3+\frac{8}{n^2-5n+2})>0$ and
$f_n(n-3+\frac{8}{n^2})=\frac{8^3}{n^6}+\frac{64(2n-5)}{n^4}+\frac{16}{n^2}-\frac{40}{n}<0$.
Hence
\begin{align}\label{foineq10}
n-3+\frac{8}{n^2}<\rho (K_2\vee (K_{n-4}+2K_1))<n-3+\frac{8}{n^2-5n+2}.
\end{align}

Similarly, we obtain
$$g_n(n-3+\epsilon)=\epsilon^3+(2n-6)\epsilon^2+(n^2-6n+6)\epsilon-2.$$
By some elementary calculus, when $n\geq 5$, we find $f_n(n-3+\frac{2}{n^2-6n+6})>0$ and
$f_n(n-3+\frac{2}{n^2})=\frac{8}{n^6}+\frac{8(n-3)}{n^4}+\frac{12}{n^2}-\frac{12}{n}<0$.
Hence
\begin{align}\label{foineq11}
n-3+\frac{2}{n^2}<\rho (K_1 \vee (K_{n-3}+K_2))<n-3+\frac{2}{n^2-6n+6}.
\end{align}
When $n\geq 7$, $\frac{8}{n^2}>\frac{2}{n^2-6n+6}$ always holds. Therefore, $\rho (K_2\vee (K_{n-4}+2K_1))>\rho (K_1 \vee (K_{n-3}+K_2))$ when $n\geq 7$.

\begin{table}[htbp]
\centering
\caption{\label{spectral1} The spectral radii of exceptional graphs in Lemma \ref{leNG1}}
\smallskip
\begin{tabular}{|c|c|c|}
  \hline
  % after \\: \hline or \cline{col1-col2} \cline{col3-col4} ...
    &  Graphs & Spectral radii \\\hline
  \multirow{4}{*}{$n=7$} & $K_2\vee (K_{1,3}+K_1)$ & $\frac{3+\sqrt{33}}{2}\approx 4.3723$ \\
  \cline{2-3}
   & $K_1 \vee K_{2,4}$ & $\approx 4.2182$ \\
  \cline{2-3}
   & $K_3 \vee 4K_1$ & $1+\sqrt{13}\approx 4.6056$ \\
  \cline{2-3}
   & $G_7^{(2)}$ & $\approx 4.4040$ \\\hline

  \multirow{2}{*}{$n=8$} & $K_3\vee (K_2+3K_1)$ & $\approx 5.1757$ \\
  \cline{2-3}
   & $G_8^{(2)}$ & $\frac{3+\sqrt{57}}{2}\approx 5.2749$ \\\hline

   \multirow{4}{*}{$n=9$} & $K_2 \vee K_{2,5}$ & $\approx 5.9150$ \\
  \cline{2-3}
   & $K_4 \vee 5K_1$ & $\frac{3+\sqrt{89}}{2}\approx 6.2170$ \\
  \cline{2-3}
   & $K_3 \vee (K_{1,4}+K_1)$ & $\approx 6.0322$ \\
  \cline{2-3}
   & $G_9^{(2)}$ & $\approx 6.1970$ \\\hline

  \multirow{2}{*}{$n=11$} & $K_5\vee 6K_1$ & $2+\sqrt{34}\approx 7.8310$ \\
  \cline{2-3}
  & $G_{11}^{(2)}$ & $\approx 8.1144$ \\\hline
\end{tabular}
\end{table}

\begin{table}[htbp]
\centering
\caption{\label{spectral1} The spectral radii of exceptional graphs in Lemma \ref{leNG3}}
\smallskip
\begin{tabular}{|c|c|c|}
  \hline
  % after \\: \hline or \cline{col1-col2} \cline{col3-col4} ...
    &  Graphs & Spectral radii \\\hline
  \multirow{4}{*}{$n=6$} & $K_1\vee (K_{1,3}+K_1)$ & $\approx 3.1020$ \\
  \cline{2-3}
   & $K_{2,4}$ & $2\sqrt{2}\approx 2.8284$ \\
  \cline{2-3}
   & $K_2 \vee 4K_1$ & $\frac{1+\sqrt{33}}{2}\approx 3.3723$ \\
  \cline{2-3}
   & $G_6^{(1)}$ & $\approx 3.1774$ \\\hline

  \multirow{2}{*}{$n=7$} & $K_2\vee (K_2+3K_1)$ & $\approx 3.9095$ \\
  \cline{2-3}
   & $G_7^{(1)}$ & $\approx 4.1055$ \\\hline

   \multirow{4}{*}{$n=8$} & $K_1 \vee K_{2,5}$ & $\approx 4.6185$ \\
  \cline{2-3}
   & $K_3 \vee 5K_1$ & $5$ \\
  \cline{2-3}
   & $K_2 \vee (K_{1,4}+K_1)$ & $\approx 4.7903$ \\
  \cline{2-3}
   & $G_8^{(1)}$ & $\approx 5.0695$ \\\hline

  \multirow{2}{*}{$n=10$} & $K_4\vee 6K_1$ & $\frac{3+\sqrt{105}}{2}\approx 6.6235$ \\
  \cline{2-3}
  & $G_{10}^{(1)}$ & $\approx 7.0367$ \\\hline
\end{tabular}
\end{table}

\section*{Acknowledgements}
Bo Ning is supported by NSFC (No. 11271300) and the Doctorate Foundation of Northwestern Polytechnical University (cx.201326). Jun Ge is supported by NSFC (No. 11171279 and No. 11271307). The authors are grateful to the editor and referees for some helpful comments, and especially thank an anonymous referee for pointing out the inequality (2) is originally from \cite{Hong}.

\end{document}